\title[On a characterization of unbounded homogeneous domains]
{On a characterization of unbounded homogeneous domains with a boundary of light cone type}
\author{Jun-ichi Mukuno}
\author{Yoshikazu Nagata}
\address{Graduate School of Mathematics, Nagoya University,
Furocho, Chikusaku, Nagoya 464-8602, Japan}
\email{m08043e@math.nagoya-u.ac.jp}
\email{m10035y@math.nagoya-u.ac.jp}
\keywords{automorphism, unbounded homogeneous domain}
\subjclass[2010]{11M32,11M06}
\theoremstyle{plain}
\newtheorem{theorem}{Theorem}
\newtheorem{lemma}{Lemma}
\newtheorem{claim}{Claim}
\theoremstyle{remark}
\newtheorem{remark}{Remark}
\numberwithin{theorem}{section}
\numberwithin{lemma}{section}
\numberwithin{corollary}{section}
\numberwithin{remark}{section}
\numberwithin{equation}{section}
\numberwithin{claim}{section}
\begin{document}
\maketitle

\begin{abstract}
We determine the automorphism groups of unbounded homogeneous domains
with boundaries of light cone type. Furthermore we present the group-theoretic 
characterization of the domain.
As a corollary we prove the non-existence of compact quotients of a 
unbounded homogeneous domain.
We also give a counterexample of the  
characterization.
\end{abstract}

\section*{Introduction}
The group-theoretic characterization problem of complex manifolds
asks whether 
the automorphism group of a complex manifold
determines only one biholomorphic equivalence class of complex manifolds.
Namely, for a given complex manifold,
it is discussed whether complex manifolds whose automorphism groups are isomorphic
to the automorphism group of the given complex manifold as topological
groups, are biholomorphic to each other.
Since there are many complex manifolds whose 
automorphism groups are trivial, 
this characterization problem has no sense for such manifolds.
Hence let us restrict our attention only to homogeneous complex manifolds,
in particular, homogeneous domains in the complex euclidean spaces.

By H.Cartan, it was shown that the 
automorphim groups of bounded domains have Lie group structures, 
and this result leads us to various studies of bounded homogeneous domains,
e.g. normal j-algebra of automorphim groups (see\cite{Pyateskii}).
Since normal j-algebras determine bounded homogeneous domains
with 1-1 correspondence,
one can say, 
that
automorphism groups characterize bounded homogeneous domains in this category.

For unbounded homogeneous domains, in contrast to the bounded domains,
automorphim groups are, in general, not (finite dimensional) Lie groups, 
and we have not obtained general theory
of automorphim groups and the characterization theorem.
Therefore any unbounded homogeneous domain is of interest, and some important cases are studied by Shimizu and Kodama \cite{KS1}, \cite{KS},
Byun, Kodama and Shimizu \cite{BKS}, etc.

In this paper, 
we proceed with a further example using Kodama and Shimizu's method in \cite{KS},
and some counterexample of group-theoretic characterization.
In order to describe our results,
let us fix notations here.
Let $\Omega$ be a complex manifold.
An {\it automorphism} of $\Omega$ means a biholomorphic mapping of $\Omega$ onto itself. 
We denote by $\mathrm{Aut}(\Omega)$ 
the group of all automorphisms of
$\Omega$ equipped with the compact-open topology.
$\Omega$ is called {\it homogeneous} if $\mathrm{Aut}(\Omega)$
acts transitively on $\Omega$.
The purpose of our paper is that
we determine
the automorphim group of the unbounded domain
\begin{equation*}
D^{n,1}=
\{ (z_0, \cdots, z_n) \in \mathbb{C}^{n+1} : -|z_{0}|^{2} + |z_{1}|^{2}+ \cdots + |z_{n}|^{2} > 0 \},
\end{equation*}
and give the characterization theorem of $D^{n,1}$
by the automorphim group $\mathrm{Aut}(D^{n,1})$.
$D^{n,1}$ is analogous to the de Sitter space
\begin{equation*}
\{(x_1, \cdots, x_n) \in \mathbb{R}^n : - x_{1}^{2} + x_{2}^{2} + \cdots x_{n}^{2} =1 \}.
\end{equation*}
The de Sitter space has a well-known property called
the Calabi-Markus phenomenon,
that is,
any isometry subgroups which acts properly discontinuously on the de Sitter space
are finite\cite{CM}. This phenomenon implies that the de Sitter space
has no compact quotient.
It is interesting whether similar results occur in other geometry.
We will study subgroups of the automorphism group $\mathrm{Aut}(D^{n,1})$
which acts properly discontinuously on $D^{n,1}$
and prove the non-existence of compact quotients of $D^{n,1}$.
It is not the precise Calabi-Markus phenomenon,
but a rigid phenomenon.
For these purposes,
we also need to consider the domain
\begin{equation*}
C^{n,1}=\{ (z_0, \cdots, z_n) \in \mathbb{C}^{n+1} : -|z_{0}|^{2} + |z_{1}|^{2}+ \cdots + |z_{n}|^{2} < 0 \},
\end{equation*}
the exterior of $D^{n,1}$ in $\mathbb{C}^{n+1}$.
To describe the automorphism groups $\mathrm{Aut}(D^{n,1})$ and $\mathrm{Aut}(C^{n,1})$,
put 
\begin{equation*}
GU(n,1)=\{A \in GL(n+1, \mathbb{C}):A^*JA = \nu(A)J, \mathrm{for\,some}\, \nu(A) \in \mathbb{R}_{>0} \},
\end{equation*}
where 
$J=
\begin{pmatrix}
-1 &   0\\
0 &  E_n
\end{pmatrix}
$.
Consider $\mathbb{C}^*$ 
as a subgroup of $GU(n,1)$:  
\begin{equation*}
\mathbb{C}^* \simeq \{
\begin{pmatrix}
\alpha & & \\
&  \ddots &  \\
& & \alpha
\end{pmatrix}:
 \alpha \in \mathbb{C}^* \} \subset GU(n,1).
\end{equation*}
Since $U(n,1)=\{A^*JA = J\} \subset GU(n,1)$ acts transitively on each level sets of
$-|z_{0}|^{2} + |z_{1}|^{2}+ \cdots + |z_{n}|^{2} (\neq 0)$,
and $\mathbb{C}^*$ acts on $D^{n,1}$ and $C^{n,1}$,
$GU(n,1)$ is a subgroup of the automorphism groups of these two domains 
$D^{n,1}$ and $C^{n,1}$.
It can be easily seen that $C^{n,1}$ and $D^{n,1}$ are homogeneous.
Now we state our main results.

\let\temp\thetheorem
\renewcommand{\thetheorem}{\ref{2}}
\begin{theorem}  
$\mathrm{Aut}(D^{n,1}) = GU(n,1)$.
\end{theorem}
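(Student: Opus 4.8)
The plan is to exploit the scaling action of $\mathbb{C}^{*}\subset GU(n,1)$ and pass to the quotient. Writing $\rho(z)=z^{*}Jz=-|z_{0}|^{2}+|z_{1}|^{2}+\cdots+|z_{n}|^{2}$, the domain $D^{n,1}=\{\rho>0\}$ is invariant under $z\mapsto\alpha z$ $(\alpha\in\mathbb{C}^{*})$, and the orbits of this action are the fibers of the holomorphic projection $\pi:D^{n,1}\to B^{+}$, where $B^{+}=\{[z]\in\mathbb{P}^{n}:\rho(z)>0\}$ is the complement in $\mathbb{P}^{n}$ of the closed ball $\overline{B}=\{[z]:\rho(z)\le 0\}$, and $B=\{[z]:\rho(z)<0\}$ is biholomorphic to the unit ball. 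Each fiber is a properly embedded copy of $\mathbb{C}^{*}$, and $\pi$ realizes $D^{n,1}$ as the tautological bundle $\mathcal{O}(-1)|_{B^{+}}$ with its zero section removed. Since $\rho(Az)=\nu(A)\rho(z)$ for $A\in GU(n,1)$, the group $GU(n,1)$ preserves the fibration and induces on $B^{+}$ exactly the projective unitary group $PU(n,1)=GU(n,1)/\mathbb{C}^{*}$. The strategy is to show that an arbitrary $f\in\mathrm{Aut}(D^{n,1})$ (i) preserves this fibration, hence descends to some $\bar f\in\mathrm{Aut}(B^{+})$; (ii) that $\mathrm{Aut}(B^{+})=PU(n,1)$; and (iii) that the residual fiberwise ambiguity is only a global scalar. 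Together these force $f\in GU(n,1)$, and the reverse inclusion is already known.

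For step (ii) I would identify $B^{+}$ as the complex analogue of de Sitter space: the form $\sqrt{-1}\,\partial\bar\partial\log\rho$ descends to a $PU(n,1)$-invariant indefinite pseudo-K\"ahler (Fubini--Study type) metric on $B^{+}$, making it a pseudo-Hermitian symmetric space dual to complex hyperbolic space. To compute $\mathrm{Aut}(B^{+})$ I would show that every holomorphic automorphism of $B^{+}=\mathbb{P}^{n}\setminus\overline{B}$ extends across the real-analytic strictly pseudoconvex boundary sphere and hence to all of $\mathbb{P}^{n}$; a holomorphic self-map of $\mathbb{P}^{n}$ preserving $\overline{B}$ is projective-linear and preserves the quadric $Q=\{\rho=0\}$, so it lies in $PU(n,1)$. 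One may instead invoke directly that $B$, being biholomorphic to the ball, has automorphism group $PU(n,1)$, and match the common boundary.

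The main obstacle is step (i): showing that an automorphism of $D^{n,1}$ respects the $\mathbb{C}^{*}$-fibration. A naive Kobayashi-metric argument fails here, because $B^{+}$ contains the compact hyperplane $\{z_{0}=0\}\cong\mathbb{P}^{n-1}$ and is therefore far from hyperbolic, so $D^{n,1}$ is replete with rational curves and the fibers are not singled out by degeneracy of the Kobayashi metric. Instead I would characterize the fibers intrinsically as the null-leaves of a canonical structure: the potential $\log\rho$ is determined up to an additive pluriharmonic term by the complex geometry of $D^{n,1}$, its Levi form $\sqrt{-1}\,\partial\bar\partial\log\rho$ is a globally defined degenerate $(1,1)$-form, and its kernel foliation is precisely the fibration by $\mathbb{C}^{*}$-orbits. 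The crux is thus to prove that this foliation (equivalently, the metric up to scale) is preserved by every automorphism. I expect this to require either a boundary-extension argument along the smooth Levi-nondegenerate locus $Q\setminus\{0\}$ of $\partial D^{n,1}$ together with the CR rigidity of the light cone, or a direct argument that the $\mathbb{C}^{*}$-orbits are the only proper holomorphic curves of their type in $D^{n,1}$; this is where the method of Kodama and Shimizu from \cite{KS} should enter.

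Once (i) and (ii) are in hand, (iii) is routine. Lifting $\bar f\in PU(n,1)$ to $A\in GU(n,1)$ and replacing $f$ by $A^{-1}f$, we may assume $f$ covers the identity of $B^{+}$, so $f$ is a holomorphic automorphism of the $\mathbb{C}^{*}$-bundle $\pi$ fixing the base. Because the underlying line bundle is not isomorphic to its inverse (it restricts to $\mathcal{O}_{\mathbb{P}^{n-1}}(-1)$ on $\{z_{0}=0\}\subset B^{+}$), fiberwise inversion is excluded, so $f$ acts by $z\mapsto\lambda(\pi(z))\,z$ for some $\lambda\in\mathcal{O}^{*}(B^{+})$. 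Since $B^{+}$ carries no non-constant holomorphic function (for $n\ge 2$, by Hartogs extension across the compact $\overline{B}$), $\lambda$ is constant, whence $f$ is multiplication by a scalar and lies in $\mathbb{C}^{*}\subset GU(n,1)$. Combined with $GU(n,1)\subseteq\mathrm{Aut}(D^{n,1})$, this yields $\mathrm{Aut}(D^{n,1})=GU(n,1)$.
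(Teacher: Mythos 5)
Your step (i) is a genuine gap, and you concede as much: the entire proposal hinges on every automorphism of $D^{n,1}$ preserving the $\mathbb{C}^*$-fibration, and for this you offer only the assertion that the potential $\log\rho$ is ``determined up to an additive pluriharmonic term by the complex geometry of $D^{n,1}$.'' That assertion is exactly what has to be proved, not a starting point: an automorphism $f$ satisfies $f^*\left(\sqrt{-1}\,\partial\bar\partial\log\rho\right)=\sqrt{-1}\,\partial\bar\partial\log(\rho\circ f)$, and nothing in your sketch rules out that $\rho\circ f$ differs from $\rho$ by more than a pluriharmonic correction, i.e.\ that $f$ scrambles the kernel foliation. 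Your fallback suggestions (CR rigidity of the cone, uniqueness of the $\mathbb{C}^*$-orbits among curves ``of their type,'' or ``the method of Kodama--Shimizu'') are pointers, not arguments; in particular the Kodama--Shimizu standardization theorem concerns compact group actions on holomorphically separable manifolds and is used in the paper only for the characterization result (its Theorem 5.3), not for computing $\mathrm{Aut}(D^{n,1})$. So steps (ii) and (iii), which do look completable, rest on an unproven foundation.

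The paper closes precisely this hole with one elementary observation you could adopt: for fixed $z_0'$, the slice $D^{n,1}\cap\{z_0=z_0'\}$ is the complement of a closed ball in $\mathbb{C}^n$ with $n\geq 2$, so by Hartogs each component of $f$ extends holomorphically across the ball; letting $z_0'$ vary gives a holomorphic extension $f:\mathbb{C}^{n+1}\to\mathbb{C}^{n+1}$, the same applies to $f^{-1}$, and the identity theorem yields $f\in\mathrm{Aut}(\mathbb{C}^{n+1})$. Consequently $f$ restricts to an automorphism of the complementary domain $C^{n,1}\simeq\mathbb{C}^*\times\mathbb{B}^n$, whose automorphism group is computed explicitly by elementary one-variable arguments (the paper's Theorem 2.1); entirety of $f$ then excludes the $z_0^{-1}$ branch and forces the nowhere-vanishing factor $c$ to be a constant multiple of the linear denominator, so $f$ is linear and lies in $GU(n,1)$. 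Note that this extension argument in particular delivers your step (i) --- the extended $f$ is linear, hence preserves the cone and the fibration --- but it does so in a way that makes the fibration machinery, and your steps (ii) and (iii), unnecessary.
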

\let\thetheorem\temp
\addtocounter{theorem}{-1}

We give the group-theoretic characterization thorem of $D^{n,1}$
in the class of complex manifolds contained in Stein manifolds.

\let\temp\thetheorem
\renewcommand{\thetheorem}{\ref{5}}
\begin{theorem} 
Let M be a connected complex manifold of dimension n+1 that is holomorphically separable
and admits a smooth envelope of holomorphy. 
Assume that $\mathrm{Aut}(M)$ is isomorphic to $\mathrm{Aut}(D^{n,1})$ as topological groups.
Then M is biholomorphic to $D^{n,1}$.
\end{theorem}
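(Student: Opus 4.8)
My plan is to transport the group data first and the geometry second, in the spirit of Kodama--Shimizu. Write $G=GU(n,1)$, so that by Theorem \ref{2} the hypothesis provides a topological-group isomorphism $\Phi\colon \mathrm{Aut}(M)\to G$. Since $G$ is a Lie group, $\Phi$ turns $\mathrm{Aut}(M)$ into a Lie group; because $M$ is holomorphically separable the evaluation action $\mathrm{Aut}(M)\times M\to M$ is continuous and effective, so the Bochner--Montgomery theorem makes it real-analytic, and holomorphic since each transformation is biholomorphic. Thus $G$ acts holomorphically and effectively on the $(n+1)$-dimensional manifold $M$. I would keep in view the structural features of $G$ that carry the geometry: $G=\mathbb{C}^*\cdot U(n,1)$ with centre $Z=\{\alpha E:\alpha\in\mathbb{C}^*\}\cong\mathbb{C}^*$ acting by scalars in the standard representation on $\mathbb{C}^{n+1}$, and the two open orbits $D^{n,1}=G/U(n-1,1)$ and $C^{n,1}=G/U(n)$, distinguished by non-compact versus compact isotropy.

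Next I would pass to the envelope. Holomorphic separability together with the existence of a smooth envelope of holomorphy lets me realize $M$ as a subdomain of a Stein manifold $\hat M$ with $\dim_{\mathbb{C}}\hat M=n+1$, and by the universal property every automorphism of $M$ extends, so $G$ acts holomorphically on $\hat M$. In the model this envelope is $\mathbb{C}^{n+1}$ and the centre $Z$ acts by scalar multiplication with the single fixed point $0\notin D^{n,1}$. Accordingly I would study the central $\mathbb{C}^*$-action on $\hat M$: using that its compact part $U(1)\subset Z$ acts holomorphically on a Stein manifold and that the $\mathbb{R}_{>0}$-flow is attracting, I aim to produce a point $\hat p\in\hat M$ fixed by $Z$, hence by all of $G$ (the fixed set of the centre is $G$-invariant), with $\hat p\in\hat M\setminus M$.

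With the fixed point in hand I would linearize. The holomorphic action of the reductive group $G$ fixing $\hat p$ is linearizable near $\hat p$ (Bochner--Cartan for a maximal compact $K=U(1)\times U(n)$, then reductivity for the whole group), so on a neighbourhood the action is equivalent to the isotropy representation of $G$ on $T_{\hat p}\hat M\cong\mathbb{C}^{n+1}$. This is an $(n+1)$-dimensional holomorphic representation of $GU(n,1)$ on which the centre acts faithfully by scalars, which identifies it with the standard representation. I would then globalize by equivariant development: transporting a vector $v_0\in\mathbb{C}^{n+1}$ whose $G$-stabilizer matches that of a nearby point gives a $G$-equivariant biholomorphism of an open orbit of $\hat M$ onto $D^{n,1}$ or $C^{n,1}$, and since $\mathbb{C}^{n+1}$ is the envelope of each such orbit this extends to a $G$-equivariant biholomorphism $\hat M\cong\mathbb{C}^{n+1}$. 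Consequently $M$ corresponds to a $G$-invariant domain, that is, a union of orbits of the linear action, so $M$ is biholomorphic to $D^{n,1}$ or to $C^{n,1}$.

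It remains to exclude $C^{n,1}$, and this is where the smooth-envelope hypothesis does its work: the envelope of holomorphy of $C^{n,1}$ is not a univalent smooth domain (this is precisely the phenomenon behind the counterexample announced in the abstract), so $M\cong C^{n,1}$ is incompatible with the assumptions, leaving $M\cong D^{n,1}$. I expect the genuine difficulty to be concentrated in the middle two paragraphs: securing the central fixed point $\hat p$ on the Stein envelope, and upgrading the local linearization at $\hat p$ to a global equivalence $\hat M\cong\mathbb{C}^{n+1}$. Both steps rely essentially on the Stein property of the envelope and on the reductivity and representation theory of $GU(n,1)$, and carrying out the globalization carefully is the technical heart of the argument, in line with Kodama--Shimizu's method.
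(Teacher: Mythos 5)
Your route (extend the $GU(n,1)$-action to the Stein envelope, find a fixed point of the centre, linearize there, globalize) is genuinely different from the paper's, but it contains a fatal error at the end and leaves unproved exactly the steps that carry the difficulty. The final exclusion of $C^{n,1}$ rests on a false claim: $C^{n,1}$ is biholomorphic to $\mathbb{C}^*\times\mathbb{B}^n$ via $(z_0,\ldots,z_n)\mapsto(z_0,z_1/z_0,\ldots,z_n/z_0)$, hence is Stein, holomorphically separable, and is its own univalent smooth envelope of holomorphy; it satisfies every hypothesis of the theorem, and indeed Theorem \ref{BKS} quoted in the paper is precisely the characterization of $C^{n,1}$ under these same hypotheses. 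What actually rules out $M\cong C^{n,1}$ is different: by Theorem \ref{1}, $\mathrm{Aut}(C^{n,1})$ contains the maps coming from $(w_0,w')\mapsto(c(w')w_0,w')$ for every nowhere vanishing holomorphic function $c$ on $\mathbb{B}^n$, so it is not a Lie group (Lemma \ref{notLie}) and therefore cannot be isomorphic to $GU(n,1)$. This non-Lie-group obstruction is the tool the paper uses throughout its case analysis (to exclude $C_{a,1}$, $D_{a,-1}$, $\mathbb{C}^*\times\mathbb{B}^n(a,b)$, etc.), and your proposal never invokes it; your parenthetical is also off, since the Section 6 counterexample concerns $D^{p,q}$ and $C^{p,q}$ with $p,q>1$ and has nothing to do with a failure of univalence of the envelope of $C^{n,1}$.

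Separately, the middle of your argument presupposes what must be proved. The assertion that the image of the centre $Z\cong\mathbb{C}^*$ acts with an attracting $\mathbb{R}_{>0}$-flow, fixes a point of $\hat M\setminus M$, and has the standard scalar isotropy representation there, assumes that the abstract topological isomorphism transports the scalar action to (essentially) the scalar action. A priori $\rho(\mathbb{C}^*)$ could act diagonally with arbitrary weights, e.g.\ $\alpha\cdot(z_0,z')=(\alpha^{a_1}z_0,\alpha^{a_2}z')$, possibly with no fixed point on the envelope at all. Determining these weights — Claim \ref{cla1}, which shows $a_2/a_1=c_2/c_1=\pm1$ — is the bulk of the paper's proof, and it requires the standardization theorem (Lemma \ref{4}) to obtain a Reinhardt model, careful Laurent-series bookkeeping, and the representation-theoretic bound of Lemma \ref{notsimple}; you offer no substitute for any of this. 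Likewise, local linearization at a fixed point for the real (non-complex, non-compact) reductive group $GU(n,1)$, and the globalization of that linearization to an equivariant biholomorphism $\hat M\cong\mathbb{C}^{n+1}$, are asserted rather than proved, and you yourself flag them as the technical heart. As it stands the proposal is a program, not a proof, and its one completed step (the dichotomy's resolution in favour of $D^{n,1}$) is resolved by an incorrect argument.
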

\let\thetheorem\temp
\addtocounter{theorem}{-1}

For the domain $C^{n,1}$, the characterization theorem was shown by 
Byun, Kodama  and Shimizu \cite{KS1}(see also the remark before Theorem \ref{BKS}). 

Our paper organizes as follows.
In Section 1,
first we will prepare notion of Reinhardt domains and 
Kodama-Shimizu's generalized standardization theorem,
which is the key lemma for our theorem.
To determine $\mathrm{Aut}(D^{n,1})$ we need an explicit form of 
the automorphism group $\mathrm{Aut}(C^{n,1})$. 
In Section 2 we determine $\mathrm{Aut}(C^{n,1})$.
We determine the automorphism groups of $D^{n,1}$
in Section 3.
We will show the non-existence of compact quotients of $D^{n,1}$ in Section 4,
using the Calabi-Markus phenomenon.
In Section 5
we prove the characterization theorem of 
$D^{n,1}$ by its automorphism group $\mathrm{Aut}(D^{n,1})$.
In Section 6, we construct a counterexample of 
the group-theoretic characterization of unbounded homogeneous domains.

\let\temp\thetheorem
\renewcommand{\thetheorem}{\ref{fth}}
\begin{theorem}
There exist unbounded homogeneous domains in $\mathbb{C}^n, n \geq 5$ which are not biholomorphically equivalent,
while its automorphism groups are isomorphic.
\end{theorem}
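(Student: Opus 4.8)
The plan is to produce the counterexample by an explicit construction rather than an abstract principle. I would take the two domains to be products
\[
\Omega_i=\mathbb{C}^{\ast}\times B_i\subset\mathbb{C}\times\mathbb{C}^{n-1}=\mathbb{C}^{n},\qquad i=1,2,
\]
where $B_1,B_2$ are bounded homogeneous domains of dimension $n-1$. The virtue of this shape is that the bounded factor renders the automorphism group computable. If $F=(F_1,F_2)$ is an automorphism of $\Omega_i$, then for each fixed $z$ the map $w\mapsto F_2(w,z)$ is a bounded holomorphic function on $\mathbb{C}^{\ast}$, hence extends across the puncture and is constant by Liouville, so $F_2(w,z)=g(z)$ with $g\in\mathrm{Aut}(B_i)$; for fixed $z$ the map $w\mapsto F_1(w,z)$ is an automorphism of $\mathbb{C}^{\ast}$, so $F_1(w,z)=e^{h(z)}w^{\pm1}$ with $h\in\mathcal{O}(B_i)$, using that $B_i$ is simply connected. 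Feeding this into the standardization machinery of Section~1, I expect the explicit description
\[
\mathrm{Aut}(\Omega_i)\cong\mathcal{O}(B_i)\rtimes\bigl(\mathrm{Aut}(B_i)\times\mathbb{Z}/2\bigr),
\]
with the additive Fr\'echet group $\mathcal{O}(B_i)$ entering through the skew maps $(w,z)\mapsto(e^{h(z)}w,z)$ and $\mathrm{Aut}(B_i)$ acting by precomposition. Both $\Omega_i$ are visibly unbounded and homogeneous.

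Non-biholomorphicity of $\Omega_1$ and $\Omega_2$ would follow from $B_1\not\cong B_2$ once the bounded factor is recovered intrinsically, and the Kobayashi pseudodistance does this: it vanishes along the $\mathbb{C}^{\ast}$-directions and is nondegenerate on $B_i$, so its null foliation has leaves $\mathbb{C}^{\ast}\times\{\mathrm{pt}\}$ and leaf space biholomorphic to $B_i$. Any biholomorphism $\Omega_1\to\Omega_2$ preserves this foliation and therefore descends to a biholomorphism $B_1\to B_2$, so it is enough to find $B_1\not\cong B_2$ with $\mathrm{Aut}(\Omega_1)\cong\mathrm{Aut}(\Omega_2)$ as topological groups. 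Here the dimension hypothesis is forced. If the $B_i$ were bounded \emph{symmetric}, then $\mathrm{Aut}(B_i)$ would be semisimple, the solvable radical of $\mathrm{Aut}(\Omega_i)$ would equal the characteristic subgroup $\mathcal{O}(B_i)$, and every topological isomorphism would descend to $\mathrm{Aut}(B_1)\cong\mathrm{Aut}(B_2)$, hence to $B_1\cong B_2$ by the normal $j$-algebra correspondence recalled in the Introduction. To break this rigidity the $B_i$ must be \emph{non-symmetric}, and the smallest dimension admitting a non-symmetric bounded homogeneous domain is $4$; this is exactly the origin of the bound $n=\dim\Omega_i\ge5$.

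The heart of the argument, and the main obstacle, is to arrange $\mathrm{Aut}(\Omega_1)\cong\mathrm{Aut}(\Omega_2)$ while $B_1\not\cong B_2$; note that $\mathrm{Aut}(B_1)\not\cong\mathrm{Aut}(B_2)$ then holds, consistently with the bounded-domain characterization, so the finite-dimensional discrepancy must be swallowed by the passage to the product. When the $B_i$ are non-symmetric, $\mathrm{Aut}(B_i)^{0}$ carries a nontrivial solvable radical $R_i$, the radical of $\mathrm{Aut}(\Omega_i)$ is the enlarged group $\mathcal{O}(B_i)\rtimes R_i$, and $\mathcal{O}(B_i)$ is no longer characteristic, so an isomorphism is free to mix the finite-dimensional $R_i$ with the infinite-dimensional $\mathcal{O}(B_i)$. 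I would therefore select $B_1,B_2$ sharing a common semisimple Levi part $L$ but with different solvable radicals $R_1\not\cong R_2$, engineered so that the $L$-groups $\mathcal{O}(B_1)\rtimes R_1$ and $\mathcal{O}(B_2)\rtimes R_2$ become isomorphic once the infinite-dimensional factor is adjoined, the difference between $R_1$ and $R_2$ being absorbed into the function-space factor; the isomorphism is then assembled by matching $L$ directly and gluing the enlarged radicals via a topological-vector-space isomorphism $\mathcal{O}(B_1)\to\mathcal{O}(B_2)$ intertwining the two precomposition actions modulo the radicals. Exhibiting such a pair $B_1,B_2$ and checking that this block-wise data glues to a genuine group isomorphism is the decisive and delicate step; by contrast the homogeneity, unboundedness, and non-biholomorphicity of the $\Omega_i$ are comparatively routine.
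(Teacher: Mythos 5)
Your proposal takes a genuinely different route from the paper, but it is not a proof: the decisive step is missing, and you say so yourself. The preparatory work is sound --- the computation of $\mathrm{Aut}(\mathbb{C}^*\times B_i)$ by Liouville/Riemann extension, the homogeneity and unboundedness of $\Omega_i=\mathbb{C}^*\times B_i$, and the reduction of non-biholomorphicity to $B_1\not\cong B_2$ via the Kobayashi pseudodistance --- but the theorem to be proved is an \emph{existence} statement, and existence is exactly what you defer: you never exhibit a pair $B_1\not\cong B_2$, and you never construct an isomorphism $\mathrm{Aut}(\Omega_1)\cong\mathrm{Aut}(\Omega_2)$; the phrase ``engineered so that $\mathcal{O}(B_1)\rtimes R_1$ and $\mathcal{O}(B_2)\rtimes R_2$ become isomorphic'' names the entire difficulty without resolving it. Moreover there are concrete reasons to think it cannot be resolved. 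First, by your own radical argument both $B_i$ must be non-symmetric of dimension $n-1$; for $n=5$ this would require two non-biholomorphic non-symmetric bounded homogeneous domains in $\mathbb{C}^4$, whereas up to biholomorphism there is only one (Piatetski-Shapiro's), so the case $n=5$ of the statement is unreachable by your construction. Second, your domains $\mathbb{C}^*\times B_i$ are Stein, hence lie squarely in the class where the group-theoretic characterization theorems operate: Theorem \ref{BKS} (Byun--Kodama--Shimizu) says precisely that for $B=\mathbb{B}^n$ the group $\mathrm{Aut}(\mathbb{C}^*\times\mathbb{B}^n)$ \emph{does} determine the domain, and all known results of this type point to the same rigidity for general bounded homogeneous $B$, i.e.\ to the non-existence of the mixing of $R_i$ with $\mathcal{O}(B_i)$ that your plan requires.

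The paper sidesteps all of this with an elementary mechanism of a completely different kind: the two domains are the two sides of a single cone, $D^{p,q}=\{|z_1|^2+\cdots+|z_p|^2-|w_1|^2-\cdots-|w_q|^2>0\}$ and $C^{p,q}=\{|z_1|^2+\cdots+|z_p|^2-|w_1|^2-\cdots-|w_q|^2<0\}$, with $p,q\geq 2$ and $p\neq q$, whence $n=p+q\geq 5$ --- a bound whose origin is entirely different from your dimension count. Since $p,q\geq 2$, the Hartogs theorem applies slice-wise in either domain, so every automorphism of $D^{p,q}$ (resp.\ of $C^{p,q}$) extends to a holomorphic automorphism of all of $\mathbb{C}^n$; restricting the extension to the complementary domain gives homomorphisms $\phi:\mathrm{Aut}(D^{p,q})\to\mathrm{Aut}(C^{p,q})$ and $\psi:\mathrm{Aut}(C^{p,q})\to\mathrm{Aut}(D^{p,q})$ that are mutually inverse by uniqueness of analytic continuation, hence $\mathrm{Aut}(D^{p,q})\cong\mathrm{Aut}(C^{p,q})$, while $D^{p,q}\not\cong C^{p,q}$ because $p\neq q$. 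No infinite-dimensional group theory and no classification of bounded homogeneous domains enters at all. If you want to salvage your route, the burden is to actually produce the pair $(B_1,B_2)$ together with the group isomorphism --- and before attempting that, to check it is not already excluded by an extension of Theorem \ref{BKS} to products $\mathbb{C}^*\times B$ with $B$ bounded homogeneous.
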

\let\thetheorem\temp
\addtocounter{theorem}{-1}

\section{Preliminary}
In order to establish terminology and notation, we recall some basic facts
about Reinhardt domains,
following Kodama and Shimizu \cite{KS1}\cite{KS} for convenience.

Let $G$ be a Lie group and
$\Omega$ a domain in $\mathbb{C}^{n}$.
Consider a
continuous group homomorphism $\rho : G \longrightarrow \mathrm{Aut}(\Omega)$. 
Then the mapping
\begin{equation*} 
G \times \Omega \ni (g, x) \longmapsto (\rho(g))(x) \in \Omega
\end{equation*}
is continuous, and in fact $C^{\omega}$.
we say that $G$ acts on $\Omega$
as a Lie transformation group through ¦Ñ. 
Let $T^{n} = (U(1))^{n}$, the $n$-dimensional
torus. 
$T^{n}$ acts holomorphically on $\mathbb{C}^n$ in the following standard manner:
\begin{equation*} 
T^n \times \mathbb{C}^n \ni (\alpha, z) \longmapsto 
\alpha \cdot z := (\alpha_1 z_1, \cdots, \alpha_n z_n) \in \mathbb{C}^n.
\end{equation*}
A {\it Reinhardt domain} $\Omega$ in $\mathbb{C}^{n}$ is, by definition,  
a domain which is stable under
this standard action of $T^{n}$. Namely, there exists a continuous map
$T^{n} \hookrightarrow \mathrm{Aut}(\Omega)$.
We denote the image of $T^{n}$ of this inclusion map by $T(\Omega)$.

Let $f$ be a holomorphic function on a Reinhardt domain $\Omega$,
then $f$ can be expanded uniquely into a Laurent series
\begin{eqnarray*}
f(z) = \sum_{\nu \in \mathbb{Z}^n} a_{\nu}z^{\nu},
\end{eqnarray*}
which converges absolutely and uniformly on any compact set in $\Omega$.
Here $z^{\nu} = z_1^{\nu_1} \cdots z_n^{\nu_n}$ for 
$\nu = (\nu_1, \cdots, \nu_n) \in \mathbb{Z}^n$.

$(\mathbb{C}^*)^n$ acts holomorphically on $\mathbb{C}^n$ as follows: 
\begin{equation*} 
(\mathbb{C}^*)^n \times \mathbb{C}^n \ni 
((\alpha_1, \cdots, \alpha_n), (z_1, \cdots, z_n)) \longmapsto 
(\alpha_1 z_1, \cdots, \alpha_n z_n) \in  \mathbb{C}^n.
\end{equation*}
We denote by $\Pi(\mathbb{C}^n)$ the group of all automorphisms of $\mathbb{C}^n$ of the form.
For a Reinhardt domain $\Omega$ in $\mathbb{C}^n$, we denote by $\Pi(\Omega)$
the subgroup of $\Pi(\mathbb{C}^n)$
consisting of all elements of $\Pi(\mathbb{C}^n)$ leaving $\Omega$ invariant. 
We need the following two lemmas to prove the characterization theorem.

\begin{lemma}[\cite{KS1}]\label{KS1}
Let $\Omega$ be a Reinhardt domain in $\mathbb{C}^n$ . 
Then $\Pi(\Omega)$ is the centralizer of $T(\Omega)$ in $\mathrm{Aut}(\Omega)$.
\end{lemma}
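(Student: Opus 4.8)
The plan is to establish the two inclusions separately, since the statement asserts an equality of subgroups of $\mathrm{Aut}(\Omega)$. The containment $\Pi(\Omega) \subseteq Z_{\mathrm{Aut}(\Omega)}(T(\Omega))$ (the centralizer of $T(\Omega)$) is the easy half and I would dispose of it first: both $T(\Omega)$ and $\Pi(\Omega)$ sit inside the group $\Pi(\mathbb{C}^n) \cong (\mathbb{C}^*)^n$ of diagonal automorphisms $z \mapsto (\alpha_1 z_1, \dots, \alpha_n z_n)$, and $T(\Omega)$ is precisely the compact subgroup corresponding to $(U(1))^n \subset (\mathbb{C}^*)^n$. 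Because $(\mathbb{C}^*)^n$ is abelian, every element of $\Pi(\Omega)$ commutes with every element of $T(\Omega)$, giving this inclusion at once. The real content is the reverse inclusion.

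For the reverse inclusion I would take an arbitrary $\varphi = (\varphi_1, \dots, \varphi_n) \in \mathrm{Aut}(\Omega)$ that commutes with all of $T(\Omega)$ and show it must be diagonal and linear. Denoting by $t_\alpha : z \mapsto \alpha \cdot z$ the automorphism corresponding to $\alpha \in T^n$, the relation $\varphi \circ t_\alpha = t_\alpha \circ \varphi$ reads componentwise as $\varphi_j(\alpha \cdot z) = \alpha_j \, \varphi_j(z)$ for all $\alpha \in T^n$, all $z \in \Omega$, and each $j$. Since $\varphi$ is an automorphism, each $\varphi_j$ is holomorphic on all of $\Omega$, and as $\Omega$ is a Reinhardt domain each $\varphi_j$ therefore admits a unique Laurent expansion $\varphi_j(z) = \sum_{\nu \in \mathbb{Z}^n} a^{(j)}_\nu z^\nu$ converging absolutely and uniformly on compacta.

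Substituting the expansion into the commutation relation and using $(\alpha \cdot z)^\nu = \alpha^\nu z^\nu$ turns the identity into $\sum_{\nu} a^{(j)}_\nu \alpha^\nu z^\nu = \sum_{\nu} a^{(j)}_\nu \alpha_j z^\nu$. By uniqueness of Laurent coefficients the two series agree term by term, so $a^{(j)}_\nu (\alpha^\nu - \alpha_j) = 0$ for every $\nu \in \mathbb{Z}^n$ and every $\alpha \in (U(1))^n$. Hence whenever $a^{(j)}_\nu \neq 0$ one has $\alpha^\nu = \alpha_j$ identically on the torus, and distinctness of the characters $\alpha \mapsto \alpha^\nu$ forces $\nu$ to be the $j$-th standard basis vector $e_j$. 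Thus $\varphi_j(z) = c_j z_j$ with $c_j = a^{(j)}_{e_j}$, so $\varphi$ is diagonal; and since $\varphi$ maps onto the open set $\Omega$, no $c_j$ can vanish (otherwise the image would lie in the hyperplane $\{w_j = 0\}$). Therefore $\varphi \in \Pi(\mathbb{C}^n)$, and as $\varphi$ preserves $\Omega$ it lies in $\Pi(\Omega)$, completing the equality.

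The step I expect to be the main, if mild, obstacle is the passage from the functional identity to the vanishing of coefficients. It rests on the fact (recalled above) that a holomorphic function on a Reinhardt domain has a globally convergent Laurent expansion with \emph{unique} coefficients, which is exactly what legitimizes the term-by-term comparison; and on the elementary but essential point that the monomial characters of the compact torus $(U(1))^n$ are pairwise distinct, so that $\alpha^\nu \equiv \alpha_j$ genuinely pins down $\nu = e_j$. One should also be careful to note that each $\varphi_j$ is holomorphic, not merely meromorphic, on all of $\Omega$ — this is automatic because $\varphi$ is an automorphism — so that the Laurent machinery applies on the whole domain rather than on a subdomain.
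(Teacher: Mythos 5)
Your proof is correct. Note that the paper itself gives no proof of this lemma: it is quoted verbatim from the reference \cite{KS1} (Kodama--Shimizu), so there is no internal argument to compare against. Your route --- the easy inclusion from commutativity of $(\mathbb{C}^*)^n$, and for the converse the term-by-term comparison of Laurent coefficients, where $a^{(j)}_{\nu}(\alpha^{\nu}-\alpha_j)=0$ for all $\alpha\in (U(1))^n$ forces $\nu=e_j$ by distinctness of torus characters, with $c_j\neq 0$ because an automorphism cannot map the open set $\Omega$ into a coordinate hyperplane --- is the standard one and is essentially the argument of the cited reference, so it fills the gap exactly as intended.
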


\begin{lemma}[Generalized Standardization Theorem\cite{KS}]\label{4}
Let M be a connected complex manifold of dimension $n$ that is holomorphically separable
and admits a smooth envelope of holomorphy, 
and let K be a connected compact Lie group of rank $n$.
Assume that an injective continuous group homomorphism 
$\rho$ of K into $\mathrm{Aut}(\Omega)$ is given.
Then there exists a biholomorphic map F of M onto a Reinhardt domain 
$\Omega$ in $\mathbb{C}^{n}$ such that
\begin{equation*} 
F\rho(K)F^{-1} = U(n_1) \times \cdots \times U(n_s) \subset \mathrm{Aut}(\Omega),
\end{equation*}
where $\sum_{j=1}^s n_j = n$.
\end{lemma}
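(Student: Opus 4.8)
I expect this result to be the ``generalized standardization theorem'' of Kodama and Shimizu, so the plan is to standardize the action in three stages: pass to a Stein model via the envelope of holomorphy, use the maximal torus to produce Reinhardt coordinates, and finally pin down the full group $K$ from its rank. Let $\hat M$ denote the smooth envelope of holomorphy of $M$; it is a Stein manifold of the same dimension $n$, and holomorphic separability makes the canonical map $M \hookrightarrow \hat M$ a holomorphic embedding. Because every automorphism of $M$ preserves $\mathcal O(M) \cong \mathcal O(\hat M)$, the action $\rho$ extends to a continuous homomorphism $\hat\rho \colon K \to \mathrm{Aut}(\hat M)$, and it suffices to standardize $\hat\rho$ and then restrict the resulting map to $M$. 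On the Stein manifold $\hat M$ the compact group $K$ acts, so by the Peter--Weyl theorem $\mathcal O(\hat M)$ is the closure of its $K$-isotypic finite-dimensional pieces; using holomorphic separability together with properness of a suitable embedding, I would select a finite-dimensional $K$-invariant subspace of $\mathcal O(\hat M)$ whose elements give a $K$-equivariant holomorphic embedding $\hat M \hookrightarrow V = \mathbb C^N$, where $K$ acts linearly and, after averaging a Hermitian form, unitarily.

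Next I would exploit the maximal torus $T \cong T^n$ of $K$, whose dimension equals $n = \dim_{\mathbb C}\hat M$ because $K$ has rank $n$. On $V$ the torus acts diagonally through characters, and injectivity of $\hat\rho$ forces $T$ to act effectively, so the occurring weights span a rank-$n$ sublattice of $\mathrm{Hom}(T^n,U(1)) \cong \mathbb Z^n$. Choosing weight coordinates $w_1,\dots,w_n$ whose weights form a basis of this sublattice, the map $F=(w_1,\dots,w_n)\colon \hat M \to \mathbb C^n$ intertwines $T$ with the standard torus action after the corresponding identification $T \cong T^n$. The complexified torus $(\mathbb C^*)^n$ then acts on $\hat M$ with an open dense orbit on which $F$ is a biholomorphism onto $(\mathbb C^*)^n$; a monodromy-and-separation argument, using that $\hat M$ is Stein and holomorphically separable, upgrades this to a global biholomorphism of $\hat M$ onto a Reinhardt domain $\hat\Omega$ with $F\,\hat\rho(T)\,F^{-1}=T(\hat\Omega)$. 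Setting $\Omega=F(M)$ yields the desired Reinhardt domain and identifies $\rho(T)$ with the standard torus $T(\Omega)$.

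It remains to identify the whole group $K$. Working in the coordinates above, $K$ acts linearly and unitarily on $V$ and contains $T$ as a maximal torus; since $K$ normalizes $T$ it permutes the $T$-weight spaces, so the $n$ essential coordinates organize into $K$-irreducible blocks, each of which is a standard representation of a unitary factor. The connected compact subgroups of $GL(n,\mathbb C)$ that contain the diagonal torus $T(\Omega)$ as a maximal torus are exactly the block-diagonal products $U(n_1)\times\cdots\times U(n_s)$ with $\sum_{j=1}^s n_j=n$; hence, after reindexing the coordinates, $F\rho(K)F^{-1}=U(n_1)\times\cdots\times U(n_s)$, which leaves $\Omega$ invariant and contains $T(\Omega)$, as required.

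I expect the main obstacle to be the passage from local to global in the second stage, namely showing that the $T$-equivariant map $F$ assembled from weight coordinates is a genuine biholomorphism of $\hat M$ onto a \emph{logarithmically convex} Reinhardt domain rather than merely a local chart on the open orbit; this is precisely where the Stein property of the envelope of holomorphy and holomorphic separability must be used in full, to rule out nontrivial monodromy and to control the boundary behaviour. A secondary difficulty is verifying that the $n$ essential coordinates can be grouped $K$-invariantly, i.e.\ that the linear $K$-action really decomposes them into standard unitary blocks; this reduces to the representation-theoretic classification of connected compact subgroups of $GL(n,\mathbb C)$ containing the diagonal torus as a maximal torus, and is exactly where the hypothesis $\mathrm{rank}\,K=n$ is indispensable.
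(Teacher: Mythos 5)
A point of orientation first: the paper does not prove this lemma at all --- it is imported verbatim as a known theorem of Kodama and Shimizu \cite{KS}, so there is no ``paper's own proof'' to compare against; the only meaningful benchmark is the actual argument in \cite{KS}, which rests on the torus-standardization theorem (Barrett--Bedford--Dadok, Kodama--Shimizu). Your overall architecture --- extend the action to the envelope of holomorphy, standardize a maximal torus to obtain a Reinhardt model, then pin down $K$ via the rank condition --- matches that strategy, but two of your steps, as written, would fail. The first is your mechanism for producing the Reinhardt model: you claim that the complexified torus $(\mathbb{C}^*)^n$ acts on $\hat M$ with an open dense orbit, on which $F$ is a biholomorphism onto $(\mathbb{C}^*)^n$. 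This is already false for $n=1$, $K=T^1$, $\hat M$ an annulus $\{1<|z|<2\}$: the annulus is Stein and carries an effective rotation action, but admits no $\mathbb{C}^*$-action whatsoever, and no weight coordinate maps it onto $\mathbb{C}^*$. Holomorphic actions of compact groups on Stein manifolds complexify only after enlarging the manifold (a theorem of Heinzner), never on $\hat M$ itself in general. The genuine proof avoids any complexified orbit: it decomposes $\mathcal{O}(\hat M)$ into $T^n$-eigenspaces (Fourier--Laurent modes) and carries out the hard work --- injectivity of the eigenfunction map, openness, and the Reinhardt property of the image --- at the level of these eigenfunctions. Note also that choosing $n$ eigenfunctions whose weights form a lattice basis does not by itself give injectivity, since weight spaces can have multiplicity. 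So what you flag as ``the main obstacle'' is not merely the hard part; the route you propose through it does not exist.

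The second gap is the final group identification. The classification you invoke --- compact connected subgroups of $GL(n,\mathbb{C})$ containing the diagonal torus as a maximal torus are, up to permutation, the block products $U(n_1)\times\cdots\times U(n_s)$ --- is correct, but to apply it you must first know that $F\rho(K)F^{-1}$ acts on $\Omega\subset\mathbb{C}^n$ by \emph{linear} maps. What your construction gives is that $K$ acts linearly on the auxiliary space $V=\mathbb{C}^N$; this does not transfer to $\mathbb{C}^n$, because a general element of $K$ need not preserve the $n$-dimensional span of your chosen coordinates $w_1,\dots,w_n$ inside $\mathcal{O}(\hat M)$ --- only the normalizer of $T$ in $K$ permutes $T$-weight spaces, and generic elements of $K$ mix the chosen eigenfunctions with other vectors of $V$. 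Establishing that a compact connected subgroup of $\mathrm{Aut}(\Omega)$ containing the standard torus $T(\Omega)$ consists of linear transformations is a genuine, separate step in \cite{KS}, carried out through Laurent expansions of automorphisms relative to the torus action (in the same spirit as Lemma \ref{KS1} and the $Pf$-argument of Claim \ref{cla1} in this paper); it cannot be obtained for free from linearity on $V$.
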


\section{The automorhpsim group of $C^{n,1}$}

In this section, we consider the automorphism group $\mathrm{Aut}(C^{n,1})$ of the domain
\begin{equation*}
C^{n,1}=\{ (z_0, \ldots, z_n) \in \mathbb{C}^{n+1} : -|z_{0}|^{2} + |z_{1}|^{2}+ \cdots + |z_{n}|^{2} < 0 
\}.
\end{equation*}   

 \begin{theorem} \label{1}
  For 
 $f=(f_{0}, f_{1}, \ldots, f_{n}) \in \mathrm{Aut}(C^{n,1})$, we have
 \begin{align*}
 &f_{0}(z_{0}, z_{1}, \ldots, z_{n})=c\left(\frac{z_{1}}{z_{0}}, \frac{z_{2}}{z_{0}}, \ldots, 
 \frac{z_{n}}{z_{0}}\right) z_{0} 
 \ \mathrm{or} \ 
 c\left(\frac{z_{1}}{z_{0}}, \frac{z_{2}}{z_{0}}, \ldots, \frac{z_{n}}{z_{0}}\right) z_{0}^{-1}, \\
 \mathrm{and}\\
 &f_{i} (z_{0}, z_{1}, \ldots, z_{n}) =f_{0}(z_{0}, z_{1}, \ldots, z_{n}) 
 \frac{\sum_{j=0}^{n} a_{i j} z_{j}}{ \sum_{j=0}^{n} a_{0 j} z_{j}}, \,\,\,\,\mathrm{for}\,\, i=1, \ldots, n,
 \end{align*}
 where $c$ is a nowhere vanishing holomorphic function on 
 $\mathbb{B}^{n}$, and  the matrix $(a_{i j})_{0\leq i, j \leq n}$ is an element of $PU(n,1)$.  
 \end{theorem}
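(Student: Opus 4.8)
The plan is to pass to the product model $\mathbb{B}^n \times \mathbb{C}^*$. Since the defining inequality $-|z_0|^2 + |z_1|^2 + \cdots + |z_n|^2 < 0$ forces $z_0 \neq 0$ on $C^{n,1}$, the map $\Phi(z_0,\dots,z_n) = (z_1/z_0,\dots,z_n/z_0,\, z_0)$ is a biholomorphism of $C^{n,1}$ onto $\mathbb{B}^n \times \mathbb{C}^*$, where $\mathbb{B}^n = \{w : |w_1|^2 + \cdots + |w_n|^2 < 1\}$: writing $w_i = z_i/z_0$ and $\zeta = z_0$, the inequality becomes $|w_1|^2 + \cdots + |w_n|^2 < 1$ with no constraint on $\zeta \in \mathbb{C}^*$. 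Thus it suffices to determine $\mathrm{Aut}(\mathbb{B}^n \times \mathbb{C}^*)$ and then conjugate the result back by $\Phi$.

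The key observation is that every holomorphic map $\mathbb{C}^* \to \mathbb{B}^n$ is constant: each of its components is a holomorphic function on $\mathbb{C}^*$ of modulus $<1$, hence extends holomorphically across $0$ by the removable singularity theorem to a bounded entire function, which is constant by Liouville. Now let $F$ be an automorphism of $\mathbb{B}^n \times \mathbb{C}^*$ and write $F = (G, H)$ with $G$ taking values in $\mathbb{B}^n$ and $H$ in $\mathbb{C}^*$. Restricting $G$ to a fibre $\{w\} \times \mathbb{C}^*$ gives a holomorphic map $\mathbb{C}^* \to \mathbb{B}^n$, which is therefore constant; hence $G(w,\zeta) = G(w)$ depends on $w$ alone and $F(w,\zeta) = (G(w), H(w,\zeta))$. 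Applying the same reasoning to $F^{-1}$ produces a holomorphic $\tilde G : \mathbb{B}^n \to \mathbb{B}^n$ with $\tilde G \circ G = \mathrm{id}$ and $G \circ \tilde G = \mathrm{id}$, so $G$ is a biholomorphism of $\mathbb{B}^n$, i.e. $G \in \mathrm{Aut}(\mathbb{B}^n) = PU(n,1)$.

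Next I analyse $H$. For each fixed $w$ the slice $\zeta \mapsto H(w,\zeta)$ is an automorphism of $\mathbb{C}^*$, and since $\mathrm{Aut}(\mathbb{C}^*)$ consists exactly of the maps $\zeta \mapsto a\zeta$ and $\zeta \mapsto a\zeta^{-1}$ with $a \in \mathbb{C}^*$, I may write $H(w,\zeta) = c(w)\,\zeta^{m(w)}$ with $m(w) \in \{+1,-1\}$ and $c(w) = H(w,1) \in \mathbb{C}^*$. Here $c$ is holomorphic and nowhere vanishing. To see that $m$ is constant, note that $\zeta\,\partial_\zeta H / H = m(w)$ is holomorphic in $(w,\zeta)$ and integer-valued; as $\mathbb{B}^n \times \mathbb{C}^*$ is connected, $m \equiv +1$ or $m \equiv -1$ throughout. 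Hence $H(w,\zeta) = c(w)\zeta$ or $H(w,\zeta) = c(w)\zeta^{-1}$ uniformly.

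It remains to translate back through $\Phi$. Identifying $\mathbb{B}^n$ with $\{[Z_0 : \cdots : Z_n] \in \mathbb{CP}^n : -|Z_0|^2 + |Z_1|^2 + \cdots + |Z_n|^2 < 0\}$, the action of $A = (a_{ij}) \in PU(n,1)$ is $[Z] \mapsto [AZ]$, so in affine coordinates $w_i = z_i/z_0$ one obtains, after clearing $z_0$, the formula $G(w)_i = (\sum_{j=0}^n a_{ij} z_j)/(\sum_{j=0}^n a_{0j} z_j)$. Since $\Phi^{-1}(w',\zeta') = (\zeta', \zeta' w'_1, \dots, \zeta' w'_n)$, conjugating $F$ by $\Phi$ yields $f_0 = H = c(z_1/z_0,\dots,z_n/z_0)\,z_0^{\pm 1}$ and $f_i = f_0 \cdot G(w)_i$, which are precisely the asserted formulae. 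The main conceptual step is the constancy argument that makes $G$ independent of the fibre coordinate and forces $G \in PU(n,1)$; everything afterwards is bookkeeping, the only mild care being the connectedness argument that pins down the exponent of $\zeta$.
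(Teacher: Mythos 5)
Your proof is correct and follows essentially the same route as the paper: pass to the product model $\mathbb{C}^*\times\mathbb{B}^n$, use the removable-singularity/Liouville argument to show the ball component is independent of the fiber variable (for the automorphism and its inverse, hence lands in $\mathrm{Aut}(\mathbb{B}^n)=PU(n,1)$), classify the fiberwise maps via $\mathrm{Aut}(\mathbb{C}^*)=\{c\zeta, c\zeta^{-1}\}$, and conjugate back. Your logarithmic-derivative argument pinning down that the exponent $m(w)=\pm 1$ is constant across the connected base is a small refinement the paper's proof passes over in silence.
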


\begin{proof}

First we
remark that $C^{n,1}$ is biholomorphic to a product domain
$\mathbb{C}^* \times \mathbb{B}^{n}$.   
In fact, 
a biholomorphic map is given by
\begin{equation*}
\Psi : C^{n,1} \ni (z_{0}, z_{1}, \ldots, z_{n}) \longmapsto 
\left(z_{0}, \frac{z_{1}}{z_{0}}, \ldots, \frac{z_{n}}{z_{0}}\right) \in 
\mathbb{C}^* \times \mathbb{B}^{n}.  
\end{equation*}
Therefore, we consider the automorphism group of  $\mathbb{C}^* \times \mathbb{B}^{n}$.

Let $(w_{0}, w_{1}, \ldots, w_{n})$ be 
a coordinate of  $\mathbb{C}^* \times \mathbb{B}^{n}$,
and 
\begin{equation*}
\gamma=(\gamma_{0}, \gamma_{1}, \ldots, \gamma_{n})
\in \mathrm{Aut}(\mathbb{C}^* \times \mathbb{B}^{n}).
\end{equation*}
For fixed $(w_{1}, \ldots, w_{n}) \in \mathbb{B}^{n}$,  
$\gamma_{i}(\cdot, w_{1}, \ldots, w_{n})$ for $i=1, \ldots, n$ 
are bounded holomorphic functions on $\mathbb{C}^*$. 
Then, by the Riemann removable singularities theorem and the Liouville theorem, 
$\gamma_{i}(\cdot, w_{1}, \ldots, w_{n})$ for $i=1, \ldots, n$ 
are constant.  
Hence 
$\gamma_i$ $(i=1, \ldots, n)$ 
does not depend on  $w_{0}$.
In the same manner, we 
see that  for 
the inverse 
\begin{equation*}
\tau=(\tau_0, \tau_1,\ldots, \tau_n)=\gamma^{{-1}} 
\in \mathrm{Aut}(\mathbb{C}^* \times \mathbb{B}^{n})
\end{equation*}
of $\gamma$, 
the functions $\tau_i$ for $i=1, \ldots, n$ are independent of $w_{0}$.
It follows that 
\begin{equation*}
\overline{\gamma} := (\gamma_{1}, \gamma_{2}, \ldots, \gamma_{n}) \in \mathrm{Aut}(\mathbb{B}^{n}).
\end{equation*}
It is well-known (see\cite{Pyateskii}) that $\gamma \in \mathrm{Aut}(\mathbb{B}^{n})$ 
is a linear fractional transformation  of the form 
\begin{equation*}
\gamma_{i}( w_{1}, w_{2}, \ldots, w_{n})= \frac{a_{i0} + \sum_{j=1}^{n} a_{ij} w_{j}}{a_{00} + \sum_{j=1}^{n} a_{0j} w_{j}}, \,\,\,\,\, i=1,2, \ldots, n,
\end{equation*}
where  the matrix $(a_{i j})_{0\leq i, j \leq n}$ is an element of $PU(n,1)$.

Next we consider $\gamma_0$ of $\gamma$ and $\tau_0$ of $\tau$.
By regarding $\overline{\gamma}$ with 
the standard action of $ \mathrm{Aut}(\mathbb{B}^{n})$ on  $\mathbb{C}^* \times \mathbb{B}^{n}$,     
 we obtain a biholomorphic map
 \begin{equation*} 
 \gamma \circ \overline{\gamma}^{-1}(w_{0}, w_{1}, w_{2}, \ldots, w_{n})=
 (\gamma_{0}(w_{0}, \overline{\gamma}^{-1}(w_{1}, w_{2}, \ldots, w_{n})), 
 w_{1}, w_{2}, \ldots, w_{n})
 \end{equation*}
 on $\mathbb{C}^* \times \mathbb{B}^{n}$.
Thus for fixed $(w_{1}, w_{2}, \ldots, w_{n}) \in \mathbb{B}^{n}$, $\gamma_{0}$ is bijective 
on $\mathbb{C}^*$ with respect to $w_{0}$, and $\tau_0(w_{0}, \overline{\gamma}(w_{1}, 
w_{2}, \ldots, w_{n}))$ is its inverse.
Since  $\mathrm{Aut}(\mathbb{C}^*) = \{cw, cw^{-1}: c \in \mathbb{C}^* \}$,
 we have  $\gamma_{0}= c(w_{1},w_{2}, \ldots, w_{n}) w_{0}$ or  
 $c(w_{1},w_{2}, \ldots, w_{n}) w_{0}^{-1}$, where $c(w_{1},w_{2}, \ldots, w_{n})$ 
 is a nowhere vanishing holomorphic function on 
 $\mathbb{B}^{n}$.  

Since $\Psi ^{-1}\mathrm{Aut}(\mathbb{C}^* \times \mathbb{B}^{n})\Psi  = 
 \mathrm{Aut}(C^{n,1})$, we have shown the theorem. 
 
\end{proof}

\bigskip

We remark that the group-theoretic characterization of the domain 
$\mathbb{C}^* \times \mathbb{B}^{n}$ are proven
by Byun, Kodama and Shimizu\cite{BKS},
and in the paper more general domains are treated.

\begin{theorem}[J.Byun, A.Kodama and S.Shimizu \cite{BKS}] \label{BKS}
Let M be a connected complex manifold of dimension n+1 that is holomorphically separable
and admits a smooth envelope of holomorphy. 
Assume that $\mathrm{Aut}(M)$ is isomorphic to $\mathrm{Aut}(C^{n,1})$ as topological groups.
Then M is biholomorphic to $C^{n,1}$.
\end{theorem}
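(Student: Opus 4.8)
The plan is to run the Kodama--Shimizu standardization machinery, exploiting the explicit description of $\mathrm{Aut}(C^{n,1})$ from Theorem \ref{1} together with the biholomorphism $C^{n,1}\cong \mathbb{C}^*\times\mathbb{B}^n$. First I would locate inside $\mathrm{Aut}(C^{n,1})$ a maximal connected compact subgroup $K$ of rank equal to $\dim_{\mathbb{C}}C^{n,1}=n+1$. From the normal form in Theorem \ref{1}, the rotations $w_0\mapsto e^{i\theta}w_0$ of the $\mathbb{C}^*$-factor, together with the isotropy subgroup $U(n)\subset PU(n,1)$ of the origin of $\mathbb{B}^n$, commute and act independently on the two factors; they generate such a $K\cong U(1)\times U(n)$, whose rank is $1+n=n+1$.

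Let $\Phi\colon \mathrm{Aut}(C^{n,1})\to\mathrm{Aut}(M)$ be the given topological group isomorphism. Then $K':=\Phi(K)$ is a compact connected subgroup of $\mathrm{Aut}(M)$, and since a continuous isomorphism between compact Lie groups is automatically a Lie group isomorphism, $K'\cong U(1)\times U(n)$ has rank $n+1=\dim_{\mathbb{C}}M$. Because $M$ is holomorphically separable and admits a smooth envelope of holomorphy, the Generalized Standardization Theorem (Lemma \ref{4}) applies to the inclusion $K'\hookrightarrow\mathrm{Aut}(M)$: there is a biholomorphism $F$ of $M$ onto a Reinhardt domain $\Omega\subset\mathbb{C}^{n+1}$ with $FK'F^{-1}=U(n_1)\times\cdots\times U(n_s)$ and $\sum_j n_j=n+1$. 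The isomorphism type $U(1)\times U(n)$ of $K'$ forces the partition to be $\{1,n\}$ (the factors $U(k)$ being pairwise non-isomorphic for distinct $k$), so after reordering coordinates $U(1)$ acts on $z_0$ and $U(n)$ on $(z_1,\dots,z_n)$.

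It then remains to identify $\Omega$. Here I would transport the remaining structure of $\mathrm{Aut}(C^{n,1})$ through $\Phi$ and $F$: the centralizer lemma (Lemma \ref{KS1}) recovers $\Pi(\Omega)$ as the centralizer of the standard torus $T(\Omega)=T^{n+1}$, while the copy of $PU(n,1)$ acting on the last $n$ coordinates and the $\mathbb{C}^*$-action (including the inversion $w_0\mapsto w_0^{-1}$) on the first exhibit $\Omega$ as a $U(n)$-invariant Reinhardt domain carrying the same transitive action as $\mathbb{C}^*\times\mathbb{B}^n$. Comparing the Laurent-series description of the holomorphic functions compatible with this action on $\Omega$ with the corresponding description on $C^{n,1}$ should pin down $\Omega$, up to biholomorphism, as $\mathbb{C}^*\times\mathbb{B}^n=C^{n,1}$.

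The hard part will be this last identification rather than the standardization: an abstract topological-group isomorphism carries no geometric information a priori, so the delicate point is to show that the distinguished subgroups (the non-compact $PU(n,1)$-part and the $\mathbb{C}^*$-part with its inversion, as well as the infinite-dimensional part of multipliers by nowhere vanishing holomorphic functions) are detected purely group-theoretically, and that their joint action forces the product structure $\mathbb{C}^*\times\mathbb{B}^n$, ruling out other homogeneous Reinhardt domains that admit a standard $U(1)\times U(n)$-action with an isomorphic automorphism group. The hypotheses that $M$ be holomorphically separable and admit a smooth envelope of holomorphy are precisely what make the standardization step available and control the boundary behavior needed to complete this identification.
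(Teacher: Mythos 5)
This theorem is not proved in the paper at all: it is quoted verbatim from Byun--Kodama--Shimizu \cite{BKS}, so there is no in-paper argument to compare against. Judged on its own terms, your proposal has a genuine gap. The standardization half is sound: producing $K\cong U(1)\times U(n)$ of rank $n+1$ inside $\mathrm{Aut}(C^{n,1})$, transporting it through the topological isomorphism $\Phi$ (compactness and the Lie structure survive), invoking Lemma \ref{4}, and forcing the partition $\{1,n\}$ (your non-isomorphism argument works for $n>1$; a dimension count $\sum n_j^2=n^2+1$, $\sum n_j=n+1$ does it cleanly). But everything after that --- ``comparing the Laurent-series description \dots should pin down $\Omega$'' --- is a restatement of the problem, not a proof. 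As your own last paragraph concedes, the entire mathematical content of such a characterization theorem lies in showing that the group structure alone excludes every other Reinhardt domain admitting a standard $U(1)\times U(n)$-action. The paper's proof of the analogous Theorem \ref{5} for $D^{n,1}$ shows what this actually requires: constraining the transported central $\mathbb{C}^*$-action on Laurent coefficients (Claim \ref{cla1}), classifying the $G$-orbits, and an exhaustive boundary case analysis using Lemma \ref{notLie} and Lemma \ref{notsimple}. None of that is sketched, let alone executed, in your proposal.

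There is also a structural obstacle your outline glosses over which makes the $C^{n,1}$ case genuinely different from the $D^{n,1}$ case: $\mathrm{Aut}(C^{n,1})$ is \emph{not} a Lie group, since by Theorem \ref{1} it contains the infinite-dimensional family of multipliers $(w_0,w')\mapsto (c(w')w_0,w')$ with $c$ nowhere vanishing. Consequently you cannot speak of ``the'' center or of distinguished finite-dimensional subgroups in the way one does for $GU(n,1)$; detecting the $PU(n,1)$-part, the $\mathbb{C}^*$-part, and the multiplier part purely from the topological group structure --- which you correctly flag as the delicate point --- is precisely the work done in \cite{BKS}, and it does not follow from the centralizer Lemma \ref{KS1} plus standardization alone. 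So the proposal is a reasonable plan whose decisive steps remain unproved.
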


\section{The automorphism Group of $D^{n,1}$}

In this section, we determine the automorphism group $\mathrm{Aut}(D^{n,1})$ of the domain
\begin{equation*}
D^{n,1}=\{ (z_0, \ldots, z_n) \in \mathbb{C}^{n+1} : -|z_{0}|^{2} + |z_{1}|^{2}+ \cdots + |z_{n}|^{2} > 0 
\},
\end{equation*}  
which is the exterior of $C^{n,1}$.
We assume $n>1$. 
We show the following theorem using Theorem \ref{1} in the previous section.

\begin{theorem} \label{2}
$\mathrm{Aut}(D^{n,1}) = GU(n,1)$ for $n>1$.
\end{theorem}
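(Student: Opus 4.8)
The plan is to establish the nontrivial inclusion $\mathrm{Aut}(D^{n,1})\subseteq GU(n,1)$, since the reverse inclusion $GU(n,1)\subseteq\mathrm{Aut}(D^{n,1})$ was already noted in the Introduction. The heuristic driving the argument is that $D^{n,1}$ is rigid precisely because it contains the locus $\{z_0=0\}$, whereas $C^{n,1}\cong\mathbb{C}^*\times\mathbb{B}^n$ does not: the extra flexibility in $\mathrm{Aut}(C^{n,1})$ produced by Theorem \ref{1} (the nowhere vanishing factor $c$ and the branch $z_0^{-1}$) is exactly what cannot survive on $D^{n,1}$. So the goal is to transfer a given $f\in\mathrm{Aut}(D^{n,1})$ to an automorphism of $C^{n,1}$, read off its shape from Theorem \ref{1}, and then kill the excess flexibility.

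First I would exploit the Hartogs structure of $D^{n,1}$. Writing $z'=(z_1,\dots,z_n)$, $\|z'\|^2=\sum_{j\ge 1}|z_j|^2$, and $Q(z)=-|z_0|^2+\|z'\|^2$, one has $D^{n,1}=\{(z_0,z'):z'\neq 0,\ |z_0|<\|z'\|\}=\mathbb{C}^{n+1}\setminus\overline{C^{n,1}}$, a Hartogs domain over $\mathbb{C}^n\setminus\{0\}$ with disc fibres centred at $z_0=0$. Because $n>1$, expanding a holomorphic function on $D^{n,1}$ in $z_0$ and applying the Hartogs extension theorem to each coefficient across $\{z'=0\}$ shows that $D^{n,1}$ is not pseudoconvex and that every holomorphic function on it continues to a fixed larger Reinhardt domain. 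The crucial point I would verify is that this envelope of holomorphy contains a two-sided neighbourhood of every smooth boundary point, i.e. of the punctured light cone $\{Q=0\}\setminus\{0\}$, omitting only a core region deep inside $C^{n,1}$. Applying this to the components $f_0,\dots,f_n$ of $f$ yields a holomorphic continuation of $f$ across the cone.

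Next I would upgrade this continuation to (the germ of) an automorphism of $C^{n,1}$. Since $f$ is proper onto $D^{n,1}$, approach to the boundary is preserved, so the continuation carries the punctured cone into itself; continuing $f^{-1}$ in the same way shows the extension is biholomorphic along the cone and maps the $C^{n,1}$-side into $C^{n,1}$. By the rigidity of $C^{n,1}$ this boundary germ is realised by a genuine $g\in\mathrm{Aut}(C^{n,1})$, to which Theorem \ref{1} applies. In projective terms this gives $[f_0:\cdots:f_n]=[Az]$ for some $A=(a_{ij})\in PU(n,1)$, together with $f_0=c(z_1/z_0,\dots,z_n/z_0)\,z_0^{\pm 1}$ for a nowhere vanishing holomorphic $c$ on $\mathbb{B}^n$; equivalently $f=\lambda(z)\,Az$ with $\lambda=f_0/(Az)_0$. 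It then remains to eliminate the branch and force $\lambda$ to be a nonzero constant. The $z_0^{-1}$ branch produces a pole as $z_0\to 0$ with $z'\neq 0$ fixed, which is impossible because such points lie in $D^{n,1}$ where $f$ is holomorphic; and holomorphy and nonvanishing of $\lambda$ on all of $D^{n,1}$, in particular across $\{z_0=0\}$ and across $\{(Az)_0=0\}$ and using that the disc fibres are simply connected, forces $\lambda$ to be constant. Then $f(z)=\lambda Az$ is linear with $(\lambda A)^*J(\lambda A)=\nu J$, $\nu=|\lambda|^2>0$, so $f\in GU(n,1)$.

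I expect the second and third steps to be the main obstacle: one must pin down how far holomorphic functions on the non-pseudoconvex domain $D^{n,1}$ actually extend, confirm that the envelope meets the cone two-sidedly, and justify that the resulting germ along the cone is an honest element of $\mathrm{Aut}(C^{n,1})$ so that Theorem \ref{1} may be invoked. Once the passage $\mathrm{Aut}(D^{n,1})\to\mathrm{Aut}(C^{n,1})$ is secured, the elimination of the inversion branch and of the function $c$ (i.e. the constancy of $\lambda$) is routine, and the proof closes.
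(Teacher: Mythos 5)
Your overall strategy coincides with the paper's: extend $f\in\mathrm{Aut}(D^{n,1})$ holomorphically past the light cone, land in $\mathrm{Aut}(C^{n,1})$ so that Theorem \ref{1} applies, then rule out the $z_0^{-1}$ branch and force the multiplier to be constant. The genuine gap is in the middle step. You claim the envelope of holomorphy of $D^{n,1}$ contains only a two-sided neighbourhood of the punctured cone, ``omitting a core region deep inside $C^{n,1}$'', and you then promote the resulting boundary germ to ``a genuine $g\in\mathrm{Aut}(C^{n,1})$'' by appealing to ``the rigidity of $C^{n,1}$''. No such rigidity statement is available, nor does it follow from what you have established: even granting that the germ can be continued into $C^{n,1}$ (which does hold, by a Kugelsatz-type argument applied slicewise in the $w'$-variables of $C^{n,1}\simeq\mathbb{C}^*\times\mathbb{B}^n$), an extension of a boundary germ is not thereby an automorphism --- you would still need that it maps $C^{n,1}$ into itself and is bijective there, and bijectivity requires a globally defined inverse, which local data near the cone cannot supply. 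This is precisely the point that needs proof, and your argument as written has no mechanism for it.

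The repair is to prove the stronger, and correct, extension statement, which is what the paper does: for fixed $z_0'$ the slice $D^{n,1}\cap\{z_0=z_0'\}$ is the exterior of a closed ball in $\mathbb{C}^n$ with $n\geq 2$, so by Hartogs each component of $f$ extends holomorphically to the whole slice, and letting $z_0'$ vary gives an entire map $F:\mathbb{C}^{n+1}\to\mathbb{C}^{n+1}$; the envelope of holomorphy of $D^{n,1}$ is all of $\mathbb{C}^{n+1}$, not a collar. Applying the same to $f^{-1}$ and invoking the identity theorem yields $F\in\mathrm{Aut}(\mathbb{C}^{n+1})$ with $F(D^{n,1})=D^{n,1}$, whence, purely set-theoretically, $F$ preserves the complement of $\overline{D^{n,1}}$, i.e. $F|_{C^{n,1}}\in\mathrm{Aut}(C^{n,1})$, and Theorem \ref{1} applies legitimately. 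Note also that your elimination of the $z_0^{-1}$ branch uses the formula of Theorem \ref{1} at points it does not cover: letting $z_0\to 0$ with $z'\neq 0$ fixed exits $C^{n,1}$, which is the only region where the identity $f_0=c(z'/z_0)z_0^{-1}$ holds. With the entire extension $F$ in hand the contradiction is immediate (restrict to the axis $z'=0$, where $F_0(z_0,0)=c(0)z_0^{-1}$ with $c(0)\neq 0$ contradicts holomorphy at $z_0=0$, or expand $c$ in a Taylor series as the paper does), and the constancy of the residual factor $\tilde{c}$ follows likewise from holomorphy of the $F_i$ near $\{z_0=0\}$; so that part of your plan is sound, but only once the global extension is in place.
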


\begin{proof}
Let 
$ f=(f_0, f_1, \ldots, f_n)\in \mathrm{Aut}(D^{n,1})$.
If $z'_0 \in \mathbb{C}$ is fixed, then the holomorphic functions $f_i(z'_0, \cdot, \cdots)$
for $i=0, \ldots, n$, on 
$D^{n,1} \cap \{ z_0 = z'_0 \}$ extend holomorphically to 
$\mathbb{C}^{n+1} \cap \{ z_0 = z'_0 \}$ by the Hartogs theorem.
Hence, when $z_0$ varies, we obtain an extended holomorphic map $f : \mathbb{C}^{n+1} \longrightarrow \mathbb{C}^{n+1}$ such that 
$f|_{D^{n,1}} \in \mathrm{Aut}(D^{n,1})$.
The same consideration for $f^{-1} \in \mathrm{Aut}(D^{n,1})$ 
shows that there exists a holomorphic map $g : \mathbb{C}^{n+1} \longrightarrow \mathbb{C}^{n+1}$, 
such that
$g|_{D^{n,1}} = f^{-1}$.
Since $g\circ f = \mathrm{id}$ and $f\circ g = \mathrm{id}$ on $D^{n,1}$, 
the uniqueness of analytic continuation shows that
$g\circ f = \mathrm{id}$ and $f\circ g = \mathrm{id}$ on $\mathbb{C}^{n+1}$.
Hence $f \in \mathrm{Aut(\mathbb{C}^{n+1})}$, so that 
$\mathrm{Aut}(D^{n,1}) \subset \mathrm{Aut}(\mathbb{C}^{n+1})$.

Now we know that $f|_{C^{n,1}} \in \mathrm{Aut}(C^{n,1})$.
By Theorem \ref{1} of the previous section, we have
\begin{equation*}
f_{0}(z_{0}, z_{1}, z_{2}, \ldots, z_{n})=c\left(\frac{z_{1}}{z_{0}}, \frac{z_{2}}{z_{0}}, \ldots, 
\frac{z_{n}}{z_{0}}\right) z_{0}^{\pm 1},
\end{equation*}
and
\begin{equation*}
f_{i} (z_{0}, z_{1}, z_{2}, \ldots, z_{n}) =f_{0}(z_{0}, z_{1}, z_{2}, \ldots, z_{n}) 
\gamma_i\left(\frac{z_{1}}{z_{0}}, \frac{z_{2}}{z_{0}}, \ldots, \frac{z_{n}}{z_{0}}\right)
\end{equation*}
for $i=1, \ldots, n$, where $c$ is a nowhere vanishing holomorphic function on $\mathbb{B}^n$ 
and 
\begin{equation*}
\gamma_i(w_1, \ldots, w_n) =
\frac{a_{i0} + \sum_{j=0}^{n} a_{ij}w_j}{a_{00} + \sum_{j=0}^{n} a_{0j}w_j}.
\end{equation*}

If we have
\begin{eqnarray*}
f_0(z) = c\left(\frac{z_{1}}{z_{0}}, \frac{z_{2}}{z_{0}}, \ldots, 
 \frac{z_{n}}{z_{0}}\right) z_{0}^{-1}, 
\end{eqnarray*}  
considering 
the Taylor expansion of $c$ at the origin, we see that $f_0$ is not holomorphic at $z_0 = 0$, 
which contradicts the fact that $f_0$ is an entire holomorphic function.
Thus we have
\begin{eqnarray*}
f_0(z) = c\left(\frac{z_{1}}{z_{0}}, \frac{z_{2}}{z_{0}}, \ldots, 
 \frac{z_{n}}{z_{0}}\right) z_{0}.
\end{eqnarray*}  
Then the entire functions $f_i \,\, (i=1, \ldots, n)$ are expressed as
\begin{eqnarray*} 
f_i(z_{0}, \ldots, z_{n}) 
&=&  \gamma_i
\left(\frac{z_1}{z_0}, \ldots, \frac{z_n}{z_0}\right) 
c\left(\frac{z_1}{z_0}, \ldots, \frac{z_n}{z_0}\right)z_0\\
&=&
(a_{i0}z_0 + \sum_{j=0}^{n} a_{ij}z_j)
c\left(\frac{z_1}{z_0}, \ldots, \frac{z_n}{z_0}\right) 
\left/ \left(a_{00} + \sum_{j=0}^{n} a_{0j}\frac{z_j}{z_0}\right)\right.,
\end{eqnarray*} 
and hence
$c(w_1, \ldots, w_n)$ must be divided by
$a_{00} + \sum_{j=0}^{n} a_{0j}w_j$.
We now write 
\begin{eqnarray*} 
c(w_1, \ldots, w_n) = (a_{00} + \sum_{j=0}^{n} a_{0j}w_j) \tilde{c}(w_1, \ldots, w_n), 
\end{eqnarray*} 
then 
\begin{eqnarray*} 
f_i(z_{0}, \ldots, z_{n})
=
(a_{i0}z_0 + \sum_{j=0}^{n} a_{ij}z_j) \tilde{c}\left(\frac{z_1}{z_0}, \ldots, \frac{z_n}{z_0}\right).
\end{eqnarray*} 
Since $\tilde{c}(\frac{z_1}{z_0}, \ldots, \frac{z_n}{z_0})$ is holomorphic near $z_0 = 0$,
the holomorphic function $\tilde{c}$ must be a non-zero constant $C$.
Consequently, we obtain
\begin{eqnarray*}
f(z_0, \ldots, z_n) = \left(C\sum_{j=0}^n a_{0j}z_j, \ldots, C\sum_{j=0}^n a_{nj}z_j\right).
\end{eqnarray*}
Thus we have shown the theorem.

\end{proof}

\section{The non-existence of compact quotients of $D^{n,1}$}

In this section, 
we prove the following theorem: 
\begin{theorem}  \label{3}
$D^{n,1}$, for $n>1$, has no compact quotient by a discrete subgroup of $\mathrm{Aut}(D^{n,1})$ acting properly discontinuously.
\end{theorem}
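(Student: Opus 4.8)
The plan is to use Theorem \ref{2} to write any such group as a discrete subgroup $\Gamma\subset GU(n,1)$ acting linearly, $z\mapsto Az$, and to exploit the multiplier homomorphism $\nu:GU(n,1)\to\mathbb{R}_{>0}$ together with the level function $\rho(z)=-|z_0|^2+|z_1|^2+\cdots+|z_n|^2$, which satisfy $\rho(Az)=\nu(A)\rho(z)$; writing $A=\sqrt{\nu(A)}\,B$ with $B\in U(n,1)$ identifies $GU(n,1)=\mathbb{R}_{>0}\times U(n,1)$, the first factor being the positive scalar matrices. I first record that the retraction $(z_0,z_1,\dots,z_n)\mapsto(0,(z_1,\dots,z_n)/|(z_1,\dots,z_n)|)$ gives $D^{n,1}\simeq S^{2n-1}$, so $D^{n,1}$ is simply connected for $n>1$. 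Assume for contradiction that $\Gamma$ acts properly discontinuously with compact quotient; by Selberg's lemma I pass to a torsion-free finite-index subgroup, still cocompact and properly discontinuous, again called $\Gamma$. As proper discontinuity forces finite stabilizers, the action is free, so $M:=\Gamma\backslash D^{n,1}$ is a closed $(2n+2)$-manifold with universal cover $D^{n,1}$ and $\pi_1(M)\cong\Gamma$ finitely generated.

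Next I would eliminate the subgroup preserving the level sets. Put $\Gamma_0:=\Gamma\cap U(n,1)=\ker(\nu|_\Gamma)$; its elements preserve $\rho$, hence the level set $Q:=\rho^{-1}(1)$, on which $U(n,1)$ acts transitively with isotropy $U(n-1,1)$, so $Q=U(n,1)/U(n-1,1)$ (and likewise $D^{n,1}=GU(n,1)/U(n-1,1)$). The restriction of the properly discontinuous $\Gamma_0$-action to the invariant closed set $Q$ is again properly discontinuous, and since $\mathrm{rank}_\mathbb{R}U(n,1)=1=\mathrm{rank}_\mathbb{R}U(n-1,1)$ for $n>1$, the Calabi--Markus phenomenon \cite{CM} makes every such group finite; being torsion-free, $\Gamma_0$ is trivial. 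Hence $\nu|_\Gamma$ is injective and $\Gamma\cong\nu(\Gamma)$ is a finitely generated subgroup of $\mathbb{R}_{>0}$, i.e. $\Gamma\cong\mathbb{Z}^k$ with $k\ge1$.

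The crux is to show $k=1$. I would invoke the properness criterion for reductive homogeneous spaces via the Cartan projection $\mu:GU(n,1)\to\mathfrak a^+\cong\{(s,t):t\ge0\}$, where $s$ records the central ($\nu$) direction and $t$ the rank-one split direction of $U(n,1)$. Since $U(n-1,1)\subset U(n,1)$ shares the split direction of $U(n,1)$ and lies in $\ker\nu$, one has $\mu(U(n-1,1))=\{0\}\times[0,\infty)$, whence the $\Gamma$-action on $GU(n,1)/U(n-1,1)$ is proper if and only if $\{\gamma\in\Gamma:|\log\nu(\gamma)|\le R\}$ is finite for every $R$, i.e. $\log\nu$ is proper on $\Gamma$. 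I expect this to be the main obstacle: the naive estimate that a single orbit escapes to infinity is \emph{not} sufficient, because an element of $U(n,1)$ with large Cartan projection also has strongly contracting directions that produce returns, so one genuinely needs the Cartan-projection formulation. Granting it, an injective homomorphism $\mathbb{Z}^k\to\mathbb{R}$ is proper only for $k\le1$ (when $k\ge2$ the image meets every bounded interval infinitely often), so $k=1$ and $\Gamma\cong\mathbb{Z}$ with $\nu(\Gamma)$ discrete.

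Finally I close with the topology. With $\Gamma\cong\mathbb{Z}$ and $D^{n,1}\simeq S^{2n-1}$ simply connected, the classifying map $M\to K(\mathbb{Z},1)=S^1$ has homotopy fibre $S^{2n-1}$, giving a homotopy fibration $S^{2n-1}\to M\to S^1$. Its Wang sequence shows $H^i(M;\mathbb{Z}/2)=0$ for $i\notin\{0,1,2n-1,2n\}$, whereas $M$ is a closed connected $(2n+2)$-manifold and hence $H^{2n+2}(M;\mathbb{Z}/2)\ne0$. Since $2n+2\notin\{0,1,2n-1,2n\}$, this is a contradiction, proving that no compact quotient exists. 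I note that the hypothesis $n>1$ enters twice: to make $D^{n,1}$ simply connected and to apply the Calabi--Markus phenomenon to the noncompact group $U(n-1,1)$.
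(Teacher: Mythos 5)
Your proposal is correct, but it settles the crux by different means than the paper. The shared skeleton is: Theorem \ref{2} to make $\Gamma\subset\mathbb{R}_{>0}\times U(n,1)$ linear, Selberg's lemma to make $\Gamma$ torsion-free, and Calabi--Markus to kill $\Gamma\cap U(n,1)$. The divergence is in proving that the multiplier group $\nu(\Gamma)\subset\mathbb{R}_{>0}$ must be discrete (your $k\le 1$). You invoke the Benoist--Kobayashi properness criterion via Cartan projections on $D^{n,1}=GU(n,1)/U(n-1,1)$; your application is correct ($\mu(U(n-1,1))=\{0\}\times[0,\infty)$, so properness of the $\Gamma$-action is equivalent to properness of $\log\nu$ on $\Gamma$), but the criterion itself is left as a black box, as you acknowledge with ``granting it.'' The paper makes exactly this step elementary: if $\nu(\Gamma)$ is non-discrete, choose infinitely many distinct $\gamma_m=(r_m,T_m)\in\Gamma$ with multipliers $r_m$ in a fixed bounded interval $(R',R'+\epsilon)$, $R'=\inf\{r_m:r_m>1\}\ge1$; since the hyperplane $\Pi=\{z_0=0\}$ has real codimension $2$, the dimension formula gives a unit vector $w_m\in\Pi\cap\gamma_m^{-1}(\Pi)$ (this is where $n>1$ enters), and since the indefinite form restricts to the Euclidean norm on $\Pi$, the vector $v_m=\gamma_m(w_m)\in\Pi$ has $|v_m|=r_m$; hence the compact annulus $K=\{z_0=0,\ 1\le|z'|^2\le(R'+\epsilon)^2+1\}$ satisfies $\gamma_m(K)\cap K\ne\emptyset$ for all $m$, contradicting proper discontinuity. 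That hyperplane trick is an elementary, self-contained substitute for the one implication of the properness criterion you need, so if you wish to avoid citing Benoist--Kobayashi, replace your third paragraph by it. Conversely, your last step is more complete than the paper's: when $\nu(\Gamma)$ is discrete, the paper concludes $\Gamma=\langle(R,T)\rangle\cong\mathbb{Z}$ and merely asserts that $D^{n,1}/\Gamma$ is not compact, whereas your Wang-sequence argument (using $D^{n,1}\simeq S^{2n-1}$, so $H^i(M;\mathbb{Z}/2)=0$ for $i\notin\{0,1,2n-1,2n\}$, against $H^{2n+2}(M;\mathbb{Z}/2)\ne0$ for a closed connected $(2n+2)$-manifold) is a genuine proof of that assertion. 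One loose end shared by both write-ups: Selberg's lemma requires $\Gamma$ to be finitely generated, which should be recorded first; it follows from cocompactness together with proper discontinuity, not from $\pi_1(M)$ after the fact.
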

We remark that $C^{n,1}$ has compact quotients
since $\mathbb{B}^{n}$ and $\mathbb{C}^*$ has compact quotients. 
Recall the following result called the Calabi--Markus phenomenon:   
 \begin{lemma}[Calabi--Markus\cite{CM}, Wolf\cite{Wolf}]~\label{CMW}
Let $\Gamma$ be  a subgoup of $O(p,q+1)$ acting properly disconcinuously 
on 
\begin{eqnarray*} 
\{(x_1, \ldots, x_{p},  x_{p+1}, \ldots, x_{p+q+1}) 
\in \mathbb{R}^{p+q+1}:- x_{1}^{2}- \cdots - x_{p}^{2} +x_{p+1}^{2} +
\cdots + x_{p+q+1}^2=1 \},
\end{eqnarray*} 
where $1 <p \leq q $. 
Then $\Gamma$ is finite.  
\end{lemma}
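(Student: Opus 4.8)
The plan is to recognize the hyperquadric as the pseudo-Riemannian symmetric space
\[
X=\{Q=1\}=O(p,q+1)/O(p,q),\qquad Q=-x_1^2-\cdots-x_p^2+x_{p+1}^2+\cdots+x_{p+q+1}^2,
\]
where $G=O(p,q+1)$ and the stabilizer of the base point $o=(0,\ldots,0,1)$ is $H=O(p,q)$, since $H$ acts on $o^{\perp}$, a space carrying a form of signature $(p,q)$. I would first record that a group acting properly discontinuously is automatically discrete in $G$: if $\gamma_k\to e$ were a nontrivial sequence in $\Gamma$, then $\gamma_k C\cap C\neq\varnothing$ for any compact neighbourhood $C$ of $o$ and all large $k$, contradicting finiteness of the collision set. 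The whole strategy is then to show that $\Gamma$ is relatively compact in $G$; being discrete, it will then be finite.

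The engine is the Cartan (polar) decomposition $G=K\exp(\overline{\mathfrak a^+})K$, with $K=O(p)\times O(q+1)$ a maximal compact subgroup, $\mathfrak a$ a maximal abelian subspace of the $(-1)$-eigenspace of a Cartan involution of $G$ preserving $H$, and $\overline{\mathfrak a^+}$ a closed Weyl chamber; write $\mu:G\to\overline{\mathfrak a^+}$ for the associated Cartan projection. Here $\dim\mathfrak a=\operatorname{rank}_{\mathbb R}G=\min(p,q+1)=p$, the last equality using $p\le q<q+1$. The place where the hypothesis $p\le q$ enters decisively is the parallel computation for $H$: $\operatorname{rank}_{\mathbb R}O(p,q)=\min(p,q)=p$. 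Thus $\operatorname{rank}_{\mathbb R}H=\operatorname{rank}_{\mathbb R}G$, so a maximal $\mathbb R$-split torus of $H$ is already maximal in $G$, and after conjugating we may assume $A=\exp\mathfrak a\subset H$. Consequently $\mu(H)\supseteq\mu(A)=\overline{\mathfrak a^+}$, i.e. the Cartan projection of $H$ fills the entire chamber.

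The final step is the comparison between proper discontinuity and Cartan projections. The criterion I would invoke (Kobayashi's properness criterion for reductive homogeneous spaces) is that proper discontinuity of the discrete group $\Gamma$ on $G/H$ forces, for every $R>0$, the set $\{\gamma\in\Gamma:\operatorname{dist}(\mu(\gamma),\mu(H))\le R\}$ to be finite. Granting this, since $\mu(H)=\overline{\mathfrak a^+}$ is the whole chamber and $\mu(\gamma)\in\overline{\mathfrak a^+}$ always, we have $\operatorname{dist}(\mu(\gamma),\mu(H))=0$ for every $\gamma\in\Gamma$; hence the displayed set equals $\Gamma$ for each $R$, and its finiteness is exactly the finiteness of $\Gamma$. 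Equivalently, $\mu(\Gamma)$ is bounded, so $\Gamma\subset K\exp(B)K$ for a bounded $B$, a compact subset of $G$, and the discrete group $\Gamma$ is finite.

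I expect the main obstacle to be justifying the criterion of the third paragraph, namely the implication that an infinite set $\{\gamma:\operatorname{dist}(\mu(\gamma),\mu(H))\le R\}$ produces a sequence $\gamma_k\to\infty$ violating proper discontinuity; this is proved by lifting a compact $C\subset X$ to a compact subset of $G$, writing everything through $G=K\exp(\overline{\mathfrak a^+})K$, and using the (sub)additivity and continuity of $\mu$ to manufacture the colliding compact set, together with the clean identification $\mu(H)=\overline{\mathfrak a^+}$. As an alternative I would keep in mind the original, more hands-on route of Calabi--Markus and Wolf, which exploits the surplus of positive directions ($q+1>p$) over negative ones to force recurrence of orbits and thereby reach the same conclusion without the general reductive machinery.
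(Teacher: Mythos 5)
Your proposal is mathematically sound, but note first that the paper does not prove this lemma at all: it is quoted as a known result of Calabi--Markus \cite{CM} and Wolf \cite{Wolf}, and the proof environment that follows it in the source is the proof of Theorem \ref{3}, not of Lemma \ref{CMW}. So the relevant comparison is with the classical argument of the cited papers, which is far more elementary than yours: take the compact set $C = \{x_1=\cdots=x_p=0,\ x_{p+1}^2+\cdots+x_{p+q+1}^2=1\}$, a $q$-sphere lying on the quadric; it is the intersection of the quadric with the positive-definite $(q+1)$-dimensional subspace $V=\{x_1=\cdots=x_p=0\}$. For any $\gamma \in O(p,q+1)$, the subspace $\gamma V$ is again positive definite of dimension $q+1$, and since $2(q+1) > p+q+1$ (this is exactly where $p \leq q$ enters), $V \cap \gamma V$ contains a nonzero vector $v$, necessarily with $Q(v)>0$, which after normalization lies in $C \cap \gamma C$. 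Hence $\gamma C \cap C \neq \emptyset$ for \emph{every} $\gamma$, and proper discontinuity forces $\Gamma = \{\gamma \in \Gamma : \gamma C \cap C \neq \emptyset\}$ to be finite. This dimension-count trick is in fact the same one the paper itself uses later, in the second half of its proof of Theorem \ref{3} (the intersection $\gamma_{m}(\Pi)\cap\Pi$). Your route via the Cartan projection is correct as far as it goes: the rank computation $\operatorname{rank}_{\mathbb{R}} O(p,q) = \operatorname{rank}_{\mathbb{R}} O(p,q+1) = p$, the conjugation placing $A\subset H$, and the identity $\mu(H) = \overline{\mathfrak{a}^{+}}$ are all right, and Kobayashi's properness criterion then yields finiteness immediately. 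But, as you yourself flag, this outsources the entire difficulty to that criterion, which is a substantially deeper theorem than the lemma being proved and which you do not establish. What your approach buys is generality --- it proves the Calabi--Markus phenomenon for any reductive $G/H$ with $\operatorname{rank}_{\mathbb{R}} H = \operatorname{rank}_{\mathbb{R}} G$ --- whereas the classical intersection argument is specific to quadrics but completely self-contained.
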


\begin{proof}
From  Theorem~\ref{2}, we know that 
$\mathrm{Aut}(D^{n,1}) = GU(n,1) = \mathbb{R}_{> 0} \times U(n,1)$, 
which acts on the complex euclidean space as linear transformations. 
We regard $\mathbb{R}_{> 0} \times U(n,1)$  as  a subgroup of $\mathbb{R}_{> 0} \times O(2n,2)$. 

Suppose that there exists a discrete subgroup 
\begin{eqnarray*} 
\Gamma = \{ f_{m} \}_{m=1}^{\infty} \subset \mathbb{R}_{>0} \times O(2n,2)
\end{eqnarray*} 
such that 
$\Gamma$ acts properly discontinuously  on $D^{n,1}$ and
that  the quotient $D^{n,1}/\Gamma$ is compact.  
By Selberg's lemma, we may assume without loss of 
generality that $\Gamma$ is torsion free. 
Set $f_{m}=(r_{m}, T_{m})$, where $r_{m}\in \mathbb{R}_{>0} $ and  $T_{m} \in O(2n,2)$. 
It is clear that $\Gamma$ is not included in $O(2n,2)$ by Lemma~\ref{CMW}.  
We consider two cases.

First  we consider the case where there exists the minimum of the set $\{ r_{m}  |  1 < r_{m}  \}$.   
We denote the minimum by $R$:
\begin{eqnarray*} 
R = \min\{ r_{m}  |  1 < r_{m}  \}.
\end{eqnarray*} 
Then we see that, for any $r_{m}$, there exists an integer $l$ such that  $r_{m} = R^{l}$. 
Therefore we can write 
\begin{eqnarray*} 
\Gamma =\{f_{l, k}=(R^{l}, T_{l, k})  \}_{l \in \mathbb{Z}, k \in \mathbb{N} }
\end{eqnarray*} 
by changing the indexes.  
Put $\Gamma_{0}=\{ f_{0,k}\}$, a subgroup of $O(2n,2)$. 
By Theorem~\ref{CMW}, it follows that $\Gamma_{0}$ is a finite group. 
Since $\Gamma_{0}$ is torsion free, $\Gamma_{0}=\{ \mathrm{id} \}$.  
Therefore, $\Gamma $ is the group generated by  the element $(R, T)  \in \Gamma$.
Hence we see that $D^{n,1}/\Gamma$ is not compact.

Next  we consider the case where there does not 
exist the minimum of the set $\{ r_{m}  |  1 < r_{m}  \}$.  
Let $R^{\prime}$ be the inifimum of  the set $\{ r_{m}  |  1 < r_{m}  \}$:
\begin{eqnarray*} 
R' = \inf\{ r_{m}  |  1 < r_{m}  \}.
\end{eqnarray*} 
Then, for any $\epsilon>0$,
by arranging the indexes of the elements of $\Gamma$, 
we can take an infinite distinct sequence 
\begin{eqnarray*} 
R^{\prime}+\epsilon >r_{1}> r_{2}>r_{3}> \cdots > r_{m}> \cdots > R^{\prime}.  
\end{eqnarray*} 
Let 
\begin{eqnarray*} 
\Pi = \{z_{0}=0\} \subset \mathbb{C}^{n+1}
\end{eqnarray*} 
and 
\begin{eqnarray*} 
K = \{z_{0}=0, 1 \leq |z_{1}|^{2} + |z_{2}|^{2} + \cdots |z_{n}|^{2} \leq (R^{\prime}+\epsilon)^{2} +1 \}
 \subset \mathbb{C}^{n+1}.
 \end{eqnarray*} 
It is clear that $K$ is compact in $D^{n,1}$.  
Let $\gamma_{m}=(r_{m}, T_{m})$. 
 We can easily see that $\gamma_{m}(\Pi) \cap \Pi$ contains a nontrivial linear subspace 
 by the dimension formula of linear map. 
Then there exist $v_{m} \in \gamma_{m}(\Pi) \cap \Pi$ and $w_{m} \in \Pi$ such that  $v_{m}=\gamma_{m}(w_{m})$
and 
that  $|w_{m}| = 1$.  
Note that $w_{m} \in  K$.  
 We see that 
 $|v_{m}| = r_{m} |w_{m}| = r_{m} \leq R^{\prime}+\epsilon$, since $v_{m} \in \Pi$, and thus 
 $v_{m} \in K$. 
 We obtain that $\gamma_{m}(K) \cap K \not= \emptyset$ for any $m \geq 1$. 
However this is a contradiction since $\Gamma$ acts on properly discontinuously.  
The proof is complete. 
\end{proof}

\section{A characterization of $D^{n,1}$ by its automorphism group}

We record first some results, which will be used in the proof of
the main theorem several times.
\begin{lemma}  \label{notLie}
Let $D$ and $D'$ be domains in $\mathbb{C}^n$.
Then the automorphism groups of domains
$\mathbb{C} \times D$, $\mathbb{C}^* \times D'$
and
$(\mathbb{C} \times D) \cup (\mathbb{C}^* \times D')$
are not Lie groups.
\end{lemma}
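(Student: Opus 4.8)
The plan is to show that each of these automorphism groups contains a subgroup that is manifestly infinite-dimensional, and hence cannot be a finite-dimensional Lie group. The key observation is that all three domains are of the form (product with a $\mathbb{C}$-factor or a $\mathbb{C}^*$-factor) $\times$ (a base domain), and such product structures carry automorphisms that ``shear'' the Euclidean factor by an arbitrary holomorphic function of the base coordinates. Concretely, for $\mathbb{C} \times D$ I would consider the maps
\begin{equation*}
(w, z) \longmapsto (w + \varphi(z), z), \qquad z \in D,
\end{equation*}
where $\varphi$ ranges over all holomorphic functions on $D$; for $\mathbb{C}^* \times D'$ I would instead use the multiplicative shears
\begin{equation*}
(w, z) \longmapsto \bigl(w\,e^{\psi(z)}, z\bigr), \qquad z \in D',
\end{equation*}
with $\psi$ an arbitrary holomorphic function on $D'$. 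Each family is a group of automorphisms of the respective domain, and since the assignment $\varphi \mapsto (\text{shear by } \varphi)$ (resp. $\psi \mapsto (\text{shear by } \psi)$) is injective, the group contains a copy of the infinite-dimensional vector space $\mathcal{O}(D)$ (resp. $\mathcal{O}(D')$) of holomorphic functions.

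The argument then proceeds by invoking the standard fact that a (finite-dimensional) Lie group cannot contain an abelian subgroup isomorphic to an infinite-dimensional topological vector space, or more directly, that its one-parameter subgroups are parametrized by a finite-dimensional Lie algebra. First I would verify that the shear maps are genuine biholomorphisms of the domain (the inverse of a shear by $\varphi$ is the shear by $-\varphi$, and similarly for $\psi$), so that they indeed lie in the automorphism group. Next I would exhibit, for each domain, a continuous injection of the infinite-dimensional space $\mathcal{O}(D)$ into the automorphism group, compatible with the additive group structure on $\mathcal{O}(D)$ and the compact-open topology; the image is a closed abelian subgroup that is not locally compact, which is incompatible with being a Lie group since closed subgroups of Lie groups are Lie groups and hence locally compact.

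For the third domain $(\mathbb{C} \times D) \cup (\mathbb{C}^* \times D')$ the situation is slightly more delicate because the two pieces are glued, and not every shear of one piece will respect the union. Here I would restrict attention to shears supported on a region that stays inside the domain: for instance, one may take additive shears $(w,z) \mapsto (w + \varphi(z), z)$ where $\varphi$ is holomorphic on an open subset $V$ of the base such that $\{(w,z) : z \in V\}$ lies entirely within the glued domain, and check that such a family is still infinite-dimensional. Alternatively, if $D \cap D'$ or the overlap geometry forces global constraints, I would argue that the subgroup fixing one of the two strata pointwise already contains an infinite-dimensional shear family. The main obstacle I anticipate is precisely this third case: one must pin down the exact geometry of the union so as to produce a genuinely infinite-dimensional family of automorphisms that preserves the \emph{whole} domain, rather than merely one of the two pieces. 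The additive ($\mathbb{C}$) and multiplicative ($\mathbb{C}^*$) shears may not patch into a single automorphism across the gluing locus, so the delicate point is to find shears whose defining functions are holomorphic on a common region covering both strata, or to reduce to one stratum by an auxiliary localization argument.
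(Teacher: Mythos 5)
Your shears are fine for the first two domains, but your treatment of the third domain has a genuine gap, and it is exactly the point you flag as delicate. A holomorphic automorphism cannot be ``supported'' on a subregion: if $(w,z)\mapsto(w+\varphi(z),z)$ is only defined for $z$ in some open set $V$ of the base, there is no way to extend it by the identity to the rest of a connected domain, because a biholomorphism agreeing with the identity on a nonempty open set is the identity everywhere (identity theorem). The same principle kills your fallback suggestion: the two ``strata'' are open, so any automorphism fixing one of them pointwise is trivial. Moreover, additive shears genuinely cannot work on the union no matter how $\varphi$ is chosen: over a point $z\in D'\setminus \overline{D}$ the fiber of $(\mathbb{C}\times D)\cup(\mathbb{C}^*\times D')$ is $\mathbb{C}^*$, and $w\mapsto w+\varphi(z)$ maps $\mathbb{C}^*$ onto $\mathbb{C}\setminus\{\varphi(z)\}$, which is not that fiber unless $\varphi(z)=0$; by the identity theorem $\varphi$ is then forced to vanish identically on $D'$.

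The missing observation, which is the paper's entire proof, is that your \emph{multiplicative} shears already handle all three domains at once: for $u$ a nowhere vanishing holomorphic function on $\mathbb{C}^n$ (e.g.\ $u=e^{h}$ with $h$ entire), the map $f(z_0,z')=(u(z')z_0,\,z')$ has holomorphic inverse $(u(z')^{-1}z_0,\,z')$ and preserves each fiber type, since multiplication by a nonzero scalar maps $\mathbb{C}$ onto $\mathbb{C}$ (it fixes the origin) and $\mathbb{C}^*$ onto $\mathbb{C}^*$. Hence one and the same formula preserves $\mathbb{C}\times D$, $\mathbb{C}^*\times D'$, and their union, and the family $\{e^{h}:h \ \mathrm{entire}\}$ is infinite dimensional. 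Taking $u$ defined on all of $\mathbb{C}^n$, rather than only on $D$ or $D'$, is what makes the argument uniform; your closing remark about ``functions holomorphic on a common region covering both strata'' is the right instinct, but it must be paired with multiplicative rather than additive shears. Your concluding step (embedding an infinite-dimensional space of holomorphic functions as a closed, non--locally compact abelian subgroup, contradicting Cartan's closed subgroup theorem) is a correct way to finish, and is in fact more detailed than the paper, which leaves that implication implicit.
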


\begin{proof}
For any nowhere vanishing holomorphic function $u$ on $\mathbb{C}^n$,
$f(z) = (u(z_1, \ldots, z_n)z_0, z_1, \ldots, z_n)$ is an automorphism on each domain.
Indeed, the inverse is given by $g(z) = (u(z_1, \ldots, z_n)^{-1}z_0, z_1, \ldots, z_n)$.
Thus the automorphism groups of these domains have no Lie group structures.
\end{proof}

\begin{lemma}  \label{notsimple}
Let $p, q, k$ be non-negative integers and $p + q \geq 2$. For $p + q > k$, any Lie group homomorphism
\begin{equation*} 
\rho : SU(p,q) \longrightarrow GL(k, \mathbb{C})
\end{equation*}
is trivial.
\end{lemma}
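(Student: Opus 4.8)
The plan is to pass from the group homomorphism to its differential and reduce the statement to a dimension count on representations of a simple complex Lie algebra. First I would use that $SU(p,q)$ is connected, so that $\rho$ is trivial if and only if its differential $d\rho : \mathfrak{su}(p,q) \to \mathfrak{gl}(k,\mathbb{C})$ is the zero map; this is the standard fact that a Lie group homomorphism out of a connected group is determined by its action on the Lie algebra. (A continuous homomorphism between Lie groups is automatically smooth, so $d\rho$ is well defined.) It therefore suffices to show $d\rho = 0$.

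Next I would complexify. The algebra $\mathfrak{su}(p,q)$ is a real form of $\mathfrak{sl}(p+q,\mathbb{C})$, and the target $\mathfrak{gl}(k,\mathbb{C})$ is already a complex Lie algebra, so by the universal property of complexification $d\rho$ extends uniquely to a $\mathbb{C}$-linear Lie algebra homomorphism
\begin{equation*}
\mathbb{C}\otimes_{\mathbb{R}}\mathfrak{su}(p,q)=\mathfrak{sl}(p+q,\mathbb{C})\longrightarrow\mathfrak{gl}(k,\mathbb{C}),
\end{equation*}
that is, a complex representation of $\mathfrak{sl}(p+q,\mathbb{C})$ of dimension $k$. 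Since $p+q\geq 2$, the algebra $\mathfrak{sl}(p+q,\mathbb{C})$ is simple, so the kernel of this homomorphism, being an ideal, is either all of $\mathfrak{sl}(p+q,\mathbb{C})$ or $\{0\}$. In the first case $d\rho$ vanishes and we are done, so the point is to rule out the faithful case.

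Assume the kernel is $\{0\}$. By Weyl's complete reducibility theorem the representation splits into irreducibles, and faithfulness forces at least one summand to be nontrivial. The key input is then that every nontrivial irreducible representation of $\mathfrak{sl}(m,\mathbb{C})$ has dimension at least $m$, the minimum being realized by the standard representation and its dual; this follows from highest-weight theory together with the Weyl dimension formula. Applying this with $m=p+q$ gives $k\geq p+q$, contradicting the hypothesis $p+q>k$. Hence the kernel cannot be $\{0\}$, so $d\rho=0$ and $\rho$ is trivial.

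I expect the only nonroutine step to be the lower bound $k \geq p+q$ on the dimension of a nontrivial irreducible representation of $\mathfrak{sl}(p+q,\mathbb{C})$; the connectedness of $SU(p,q)$, the complexification, and the use of simplicity are all standard and immediate. One should also record explicitly why $SU(p,q)$ is connected and why $\mathfrak{su}(p,q)$ complexifies to $\mathfrak{sl}(p+q,\mathbb{C})$, but these are well known and require no genuine argument here.
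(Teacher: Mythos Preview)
Your argument is correct and follows the same skeleton as the paper's proof: pass to the differential, complexify to a map $\mathfrak{sl}(p+q,\mathbb{C})\to\mathfrak{gl}(k,\mathbb{C})$, use simplicity to conclude this map is either injective or zero, and then rule out injectivity by a dimension count.

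The one point where you diverge is the final dimension count. You invoke Weyl's complete reducibility and the fact that every nontrivial irreducible representation of $\mathfrak{sl}(m,\mathbb{C})$ has dimension at least $m$, which you correctly identify as the only nonroutine ingredient. The paper sidesteps this entirely: once $d\rho_{\mathbb{C}}$ is known to be injective or trivial, it simply compares the dimensions of the two Lie algebras as complex vector spaces, observing that $\dim_{\mathbb{C}}\mathfrak{sl}(p+q,\mathbb{C})=(p+q)^2-1>k^2=\dim_{\mathbb{C}}\mathfrak{gl}(k,\mathbb{C})$ whenever $p+q>k$ and $p+q\geq 2$, so injectivity is impossible. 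This avoids any appeal to highest-weight theory or the Weyl dimension formula and makes the lemma completely elementary. Your route gives a sharper statement about representations (and would be needed if the target were, say, $GL(k,\mathbb{C})$ with $k^2\geq (p+q)^2-1$ but $k<p+q$), but for the lemma as stated the paper's bare dimension comparison is all that is required.
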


\begin{proof}
Put $n = p + q$.
It is enough to show that the Lie algebra homomorphism
\begin{equation*} 
d\rho : \mathfrak{su}(p,q) \longrightarrow \mathfrak{gl}(k, \mathbb{C})
\end{equation*}
is trivial.
Consider its complex linear extension
\begin{equation*} 
d\rho_\mathbb{C}  : \mathfrak{su}(p,q) \otimes_{\mathbb{R}} \mathbb{C} 
\longrightarrow \mathfrak{gl}(k, \mathbb{C}).
\end{equation*}
Since $\mathfrak{su}(p,q) \otimes_{\mathbb{R}} \mathbb{C} = \mathfrak{sl}(n, \mathbb{C})$
and $\mathfrak{sl}(n, \mathbb{C})$ is a simple Lie algebra,
$d\rho_\mathbb{C}$ is injective or trivial.
On the other hand, $\dim_{\mathbb{C}} \mathfrak{su}(p,q) \otimes_{\mathbb{R}} \mathbb{C} 
= n^2 - 1 > k^2 = \dim_{\mathbb{C}} \mathfrak{gl}(k, \mathbb{C})$.
Thus $d\rho_\mathbb{C}$ must be trivial,
and so is $d\rho$.
\end{proof}

Now we prove the following main theorem.

\begin{theorem}  \label{5}
Let M be a connected complex manifold of dimension n+1 that is holomorphically separable
and admits a smooth envelope of holomorphy. 
Assume that $\mathrm{Aut}(M)$ is isomorphic to $\mathrm{Aut}(D^{n,1})=GU(n,1)$ as topological groups.
Then M is biholomorphic to $D^{n,1}$.
\end{theorem}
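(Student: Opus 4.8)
The plan is to use the Generalized Standardization Theorem (Lemma \ref{4}) to realize $M$ as a Reinhardt domain on which a maximal compact torus acts in standard fashion, and then to pin down that Reinhardt domain by analyzing the group $\mathrm{Aut}(M) \cong GU(n,1)$. First I would identify inside $GU(n,1)$ a connected compact Lie group $K$ of rank $n+1$. Since $GU(n,1) = \mathbb{R}_{>0} \times U(n,1)$ and $U(n,1)$ has maximal compact subgroup $U(n) \times U(1)$ of rank $n+1$, the natural candidate is $K = U(n) \times U(1)$ (sitting inside $U(n,1)$), which has rank $n+1 = \dim_{\mathbb{C}} M$. Transporting $K$ through the topological isomorphism $\mathrm{Aut}(M) \cong GU(n,1)$ gives an injective continuous homomorphism $\rho : K \to \mathrm{Aut}(M)$, and Lemma \ref{4} then furnishes a biholomorphism $F$ of $M$ onto a Reinhardt domain $\Omega \subset \mathbb{C}^{n+1}$ with $F\rho(K)F^{-1}$ equal to a standard product $U(n_1) \times \cdots \times U(n_s)$ of unitary groups on the coordinate blocks.

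Having replaced $M$ by $\Omega$, the next step is to determine which Reinhardt domain $\Omega$ can occur. The key structural input is that $\mathrm{Aut}(\Omega) \cong GU(n,1)$ \emph{as topological groups}, and in particular $\mathrm{Aut}(\Omega)$ is a Lie group. The plan here is twofold. On one hand, Lemma \ref{notLie} rules out any $\Omega$ that splits off a $\mathbb{C}$ or $\mathbb{C}^*$ factor (or is a union of such), because those automorphism groups fail to be Lie groups; this excludes the degenerate possibilities. On the other hand, the block decomposition $U(n_1) \times \cdots \times U(n_s)$ from the standardization must be compatible with the known maximal compact subgroup $U(n) \times U(1)$ of $GU(n,1)$. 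I would use Lemma \ref{notsimple}: the simple factor $SU(n,1)$ of $GU(n,1)$ (with $n+1 > k$ for small coordinate blocks) admits no nontrivial linear representation of low dimension, which forces the linear action coming from the standardization to group the coordinates into exactly the blocks dictated by the signature $(n,1)$ geometry — i.e.\ the splitting must be $s = 2$ with $n_1 = n$, $n_2 = 1$ (or, after reindexing, the single "light cone" coordinate separated from the remaining $n$). This is what distinguishes the $D^{n,1}$ geometry from the $C^{n,1}$ geometry and from trivial products.

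Once the standard action is identified as $U(n) \times U(1)$ acting with the correct block structure, I would invoke Lemma \ref{KS1}: the centralizer $\Pi(\Omega)$ of the torus $T(\Omega)$ inside $\mathrm{Aut}(\Omega)$ consists of the diagonal $(\mathbb{C}^*)^{n+1}$-automorphisms preserving $\Omega$. Computing this centralizer on the group side, using the isomorphism with $GU(n,1)$, gives precisely the diagonal part of $GU(n,1)$, namely the scalars $\mathbb{C}^*$ together with the diagonal maximal torus. Matching $\Pi(\Omega)$ against the Laurent-monomial description of $\Pi(\Omega)$-invariance of the defining function of $\Omega$ then constrains $\Omega$ to be defined by a single real form of signature $(n,1)$ up to the allowed scalings, yielding $\Omega = D^{n,1}$ (the domain $C^{n,1}$ being separated out because its automorphism group, while also containing $GU(n,1)$, has the distinct global structure recorded in Theorem \ref{1} and is excluded by the Lie-group and representation-theoretic constraints above, as in Theorem \ref{BKS}). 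Assembling $F$ with this identification gives a biholomorphism $M \cong D^{n,1}$.

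The main obstacle I anticipate is the middle step: going from the abstract standardized action $U(n_1) \times \cdots \times U(n_s)$ to the \emph{correct} block structure and then to the \emph{correct defining inequality}. The standardization theorem only gives that the compact group acts as some product of unitary groups on coordinate blocks; it does not tell us the blocks match the $(n,1)$ signature, nor that the full automorphism group (not just the compact part) embeds as the indefinite group $U(n,1)$ rather than, say, $U(n+1)$ or a product structure. Ruling out the alternative Reinhardt domains whose compact symmetry has the same rank but a different full automorphism group — and in particular separating the $D^{n,1}$ case from the biholomorphically distinct $C^{n,1}$ case, whose automorphism group contains an isomorphic copy of $GU(n,1)$ — is the delicate part, and it is exactly where the non-Lie-group obstruction (Lemma \ref{notLie}) and the representation-rigidity of $SU(n,1)$ (Lemma \ref{notsimple}) must be combined carefully with the explicit centralizer computation of Lemma \ref{KS1}.
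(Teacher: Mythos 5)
Your overall skeleton matches the paper's: standardize $U(1)\times U(n)$ via Lemma \ref{4} to get a Reinhardt model $\Omega$, use Lemma \ref{KS1} to place the image of the center of $GU(n,1)$ inside $\Pi(\Omega)$, and use Lemmas \ref{notLie} and \ref{notsimple} to exclude competing domains. But there is a genuine gap at exactly the step you flag as delicate, and your proposed substitute for it would fail. You assert that computing the centralizer ``on the group side'' gives \emph{precisely} the diagonal part of $GU(n,1)$, i.e.\ the scalars together with the diagonal maximal torus, and that matching Laurent monomials then forces a defining inequality of signature $(n,1)$. The problem is that $\rho$ is only an abstract topological isomorphism: it does not transport the concrete matrix form of the scalars. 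All that Lemma \ref{KS1} plus commutation with $U(1)\times U(n)$ yields is that $\rho\left(e^{2\pi i(s+it)}\right)$ is diagonal of the form $\mathrm{diag}\bigl(e^{2\pi i\{a_1s+(b_1+ic_1)t\}},\, e^{2\pi i\{a_2s+(b_2+ic_2)t\}}E_n\bigr)$ with \emph{unknown} weights $a_1,a_2\in\mathbb{Z}$, $c_1,c_2\in\mathbb{R}$. The heart of the paper's proof (Claim \ref{cla1}) is to show $c_2/c_1=a_2/a_1=\pm1$: this requires the commutation constraints on the Laurent coefficients of an arbitrary $f\in\mathrm{Aut}(\Omega)\setminus G$, then the ``first-degree term'' homomorphism $f\mapsto Pf$ combined with Lemma \ref{notsimple} to kill non-integral ratios, then a torus/center argument to force the integral ratio to be $\pm1$, and Lemma \ref{notLie} to dispose of the degenerate cases $c_1c_2=0$. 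Nothing in your outline produces this; without it the invariant inequality could a priori involve $|z_0|^{2\lambda}$ for an arbitrary real weight $\lambda$, and Laurent-monomial matching alone cannot exclude that. (Also, your use of Lemma \ref{notsimple} to fix the block sizes $n_1=n$, $n_2=1$ is misplaced: that part is immediate, since conjugation by $F$ is a group isomorphism and $U(n_1)\times\cdots\times U(n_s)\cong U(1)\times U(n)$ already forces the blocks; \ref{notsimple} is needed later, for the weight ratio and for $C_{a,-1}$.)

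Second, even once the weights are known to be $\pm1$, knowing $\Pi(\Omega)$ and $G$-invariance does not make $\Omega$ ``defined by a single real form'': a $G$-invariant Reinhardt domain is an arbitrary connected union of $G$-orbits, so besides $D_{a,1}$ you must eliminate $C_{a,1}$, $D_{a,-1}$, $C_{a,-1}$, the shells $\{\,b|z_0|^{2}<|z_1|^{2}+\cdots+|z_n|^{2}<a|z_0|^{2}\,\}$, and all of these with coordinate hyperplanes or the origin deleted. The paper does this by an orbit analysis of $G$ on $\partial\Omega$ followed by case-by-case exclusion: the domains biholomorphic to $\mathbb{C}^*\times(\cdot)$ are killed by Lemma \ref{notLie}, while $C_{a,-1}$ and its punctured variants are killed by rerunning the $Pf$ representation argument against Lemma \ref{notsimple} (Claim \ref{cla4}). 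Your proposal names the right tools but supplies no argument for this classification; in particular, your stated mechanism for separating $D^{n,1}$ from $C^{n,1}$ (``the blocks match the $(n,1)$ signature'') cannot work, since both domains carry exactly the same $U(1)\times U(n)$ block action --- what actually separates them is that $\mathrm{Aut}(C_{a,1})$ fails to be a Lie group, together with the representation-theoretic contradiction for $C_{a,-1}$.
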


\begin{proof}
Denote by $\rho_0 : GU(n,1) \longrightarrow \mathrm{Aut}(M)$ a topological group isomorphism.
Let us consider $U(1) \times U(n)$ as a matrix subgroup of 
$GU(n,1)$ in the natural way,
and identify $U(n)$ with $\{1\} \times U(n)$.
Then, by Lemma \ref{4}, there is a biholomorphic map F from M onto a Reinhardt domain 
$\Omega$ in $\mathbb{C}^{n+1}$ such that
\begin{equation*} 
F\rho_0(U(1) \times U(n))F^{-1} 
= U(n_1) \times \cdots \times U(n_s) \subset \mathrm{Aut}(\Omega),
\end{equation*}
where $\sum_{j=1}^s n_j = n+1$.
Then, after a permutation of coordinates if we need, we may assume
$F\rho_0(U(1) \times U(n))F^{-1} = U(1) \times U(n)$.
We define an isomorphism
\begin{equation*} 
\rho: GU(n,1) \longrightarrow \mathrm{Aut}(\Omega)
\end{equation*}
by
$\rho(g) := F \circ \rho_0(g) \circ F^{-1}$.
We will prove that $\Omega$ is biholomorphic to $D^{n,1}$.

Put
\begin{equation*} 
T_{1,n}=
\left\{
\begin{pmatrix}
u_1 &  \\
  &  u_2E_n \\
\end{pmatrix}:
 u_1, u_2 \in U(1)
 \right\} \subset GU(n,1).
\end{equation*} 
Since $T_{1,n}$
is the center of the group $U(1) \times U(n)$,
we have $\rho(T_{1,n}) = T_{1,n} \subset \mathrm{Aut}(\Omega)$.
Consider $\mathbb{C}^*$ 
as a subgroup of $GU(n,1)$. 
So $\mathbb{C}^*$ represents center of $GU(n,1)$.
Since $\rho(\mathbb{C}^*)$ is 
commutative with $T^{n+1}$,
Lemma \ref{KS1} tells us that $\rho(\mathbb{C}^*) \subset \Pi(\Omega)$,
that is, $\rho(\mathbb{C}^*)$ is represented by diagonal matrices.
Furthermore, $\rho(\mathbb{C}^*)$ commutes with $\rho(U(1) \times U(n)) = U(1) \times U(n)$,
so that we have 
\begin{eqnarray*}
\rho\left(e^{2\pi i(s+it)}\right) =
\begin{pmatrix}
e^{2\pi i\{a_1s+(b_1+ic_1)t\}} & \\
&  e^{2\pi i\{a_2s+(b_2+ic_2)t\}}E_n
\end{pmatrix}
\in \rho(\mathbb{C}^*),
\end{eqnarray*}
where
$s,t \in \mathbb{R}$, $a_1, a_2 \in \mathbb{Z}, b_1,b_2,c_1,c_2 \in \mathbb{R}$.
Since $\rho$ is injective,
$a_1, a_2$ are
relatively prime
and $(c_1, c_2) \neq (0, 0)$.
To consider the actions of $\rho(\mathbb{C}^*)$ and $U(1) \times U(n)$ 
on $\Omega$ together,
we put
\begin{equation*} 
G(U(1) \times U(n)) = 
\left\{e^{-2\pi t}
\begin{pmatrix}
u_0 & \\
&  U
\end{pmatrix}
\in GU(n,1) : t \in \mathbb{R}, u_0 \in U(1), U \in U(n)
\right\}.
\end{equation*}
Then we have
\begin{eqnarray*} 
G &:=&\rho(G(U(1) \times U(n))) \\
&{}=&
\left\{
\begin{pmatrix}
e^{-2\pi c_1t}u_0 & \\
&  e^{-2\pi c_2t}U
\end{pmatrix}
\in GL(n+1,\mathbb{C}) : t \in \mathbb{R}, u_0 \in U(1), U \in U(n)
\right\}.
\end{eqnarray*}
Note that $G$ is the centralizer
of $T_{1,n} = \rho(T_{1,n})$ in $\mathrm{Aut}(\Omega)$.

Let $f =(f_0, f_1, \ldots, f_n) \in \mathrm{Aut}(\Omega)\setminus G$ and consider 
its Laurent expansions:\\
\begin{eqnarray}
f_0(z_0, \ldots, z_n) 
&=& \sum_{\nu \in \mathbb{Z}^{n+1}} a^{(0)}_{\nu}z^{\nu},\label{eqf0} \\
f_i(z_0, \ldots, z_n) 
&=& \sum_{\nu \in \mathbb{Z}^{n+1}} a^{(i)}_{\nu}z^{\nu}, \,\,\,\, 1\leq i \leq n.\label{eqfi}
\end{eqnarray}
If $f$ is a linear map of the form
\begin{equation*}
\begin{pmatrix}
a^{(0)}_{(1,0,\ldots,0)} & 0 & \cdots & 0 &\\
0 & a^{(1)}_{(0,1,0,\ldots,0)} & \cdots & a^{(1)}_{(0,\ldots,0,1)}\\
 \vdots  & \vdots &  \ddots & \vdots\\
0 & a^{(n)}_{(0,1,0,\ldots,0)} & \cdots           & a^{(n)}_{(0,\ldots,0,1)}
\end{pmatrix}
\in GL(n+1, \mathbb{C}).
\end{equation*}
then $f$ commutes with $\rho(T_{1,n})$, which contradicts $f \notin G$.
Thus for any $f \in \mathrm{Aut}(\Omega) \setminus G$,
there exists $\nu \in \mathbb{Z}^{n+1} (\neq (1,0,\ldots,0))$
such that $a^{(0)}_{\nu} \neq 0$ in (\ref{eqf0}),
or 
there exists
$\nu \in \mathbb{Z}^{n+1} (\neq (0,1,0 \ldots,0)$, 
$\ldots$, $(0,0 \ldots,0,1))$
such that
$a^{(i)}_{\nu} \neq 0$ in (\ref{eqfi}) for some $1 \leq i \leq n$.

\bigskip
\begin{remark}\label{remt}
We remark here that, in $(\ref{eqf0})$ and $(\ref{eqfi})$, there are no negative degree terms 
of $z_1,\ldots,z_n$,
since $\Omega \cup \{z_i = 0\} \neq \emptyset$ for $1 \leq i \leq n$
by the $U(n)$-action on $\Omega$,
and since Laurent expansions are globally defined on $\Omega$.
Write $\nu=(\nu_0, \nu')=(\nu_0, \nu_1, \ldots, \nu_n)$
and $|\nu'|=\nu_1 + \cdots + \nu_n$.
Let us consider $\nu' \in \mathbb{Z}^n_{\geq 0}$
and put
\begin{eqnarray*}
{\sum_{\nu}}' = \sum_{\nu_0\in \mathbb{Z}, \nu' \in \mathbb{Z}^n_{\geq 0}}
\end{eqnarray*}
from now on.
\end{remark}

\bigskip

\begin{claim}\label{cla1}
$a_1a_2c_1c_2 \neq 0$, and $\lambda := c_2/c_1 = a_2/a_1 = \pm 1$.
\end{claim}

\begin{proof}
To prove the claim, we divide three cases.

\smallskip
{\bf Case (i)}: 
$c_1c_2 \neq 0$.\\
Since $\mathbb{C}^*$ is the center of $GU(n,1)$, 
it follows that, 
for $f \in \mathrm{Aut}(\Omega) \setminus G$, 
\begin{eqnarray*}
f \circ \rho(e^{2\pi i(s+it)}) = \rho(e^{2\pi i(s+it)}) \circ f.
\end{eqnarray*}
By $(\ref{eqf0})$ and $(\ref{eqfi})$,
this equation means
\begin{eqnarray*}
e^{2\pi i\{a_1s+(b_1+ic_1)t\}} {\sum_{\nu}}' a^{(0)}_{\nu}z^{\nu}
&=& 
{\sum_{\nu}}' a^{(0)}_{\nu}
(e^{2\pi i\{a_1s+(b_1+ic_1)t\}}z_0)^{\nu_0^{(0)}}
(e^{2\pi i\{a_2s+(b_2+ic_2)t\}}z')^{\nu'}\\
&=&
{\sum_{\nu}}' a^{(0)}_{\nu}
e^{2\pi i\{a_1s+(b_1+ic_1)t\}\nu_0^{(0)}} e^{2\pi i\{a_2s+(b_2+ic_2)t\}|\nu'|}
z^{\nu}
\end{eqnarray*}
and
\begin{eqnarray*}
e^{2\pi i\{a_2s+(b_2+ic_2)t\}} {\sum_{\nu}}' a^{(i)}_{\nu}z^{\nu}
&=&
{\sum_{\nu}}' a^{(i)}_{\nu}
(e^{2\pi i\{a_1s+(b_1+ic_1)t\}}z_0)^{\nu_0^{(i)}}
(e^{2\pi i\{a_2s+(b_2+ic_2)t\}}z')^{\nu'}\\
 &=&
{\sum_{\nu}}' a^{(i)}_{\nu}
e^{2\pi i\{a_1s+(b_1+ic_1)t\}\nu_0^{(i)}} e^{2\pi i\{a_2s+(b_2+ic_2)t\}|\nu'|} z^{\nu},
\end{eqnarray*}
for $1 \leq i \leq n$.
Thus for each $\nu \in \mathbb{Z}^{n+1}$, we have
\begin{eqnarray*}
e^{2\pi i\{a_1s+(b_1+ic_1)t\}} a^{(0)}_{\nu}
= e^{2\pi i\{a_1s+(b_1+ic_1)t\}\nu_0^{(0)}} e^{2\pi i\{a_2s+(b_2+ic_2)t\}|\nu'|} a^{(0)}_{\nu},
\end{eqnarray*}
and
\begin{eqnarray*}
e^{2\pi i\{a_2s+(b_2+ic_2)t\}} a^{(i)}_{\nu}
= e^{2\pi i\{a_1s+(b_1+ic_1)t\}\nu_0^{(i)}} e^{2\pi i\{a_2s+(b_2+ic_2)t\}|\nu'|} a^{(i)}_{\nu},
\end{eqnarray*}
for $1 \leq i \leq n$.
Therefore, if
$a_{\nu}^{(0)}\neq0$ for $\nu=(\nu_0^{(0)}, \nu')=(\nu_0^{(0)}, \nu_1^{(0)}, \ldots, \nu_n^{(0)})$,
we have
\begin{eqnarray}\label{ac0}
\hspace{-3cm}
\left\{\begin{array}{l}
a_1(\nu_0^{(0)}-1) + a_2(\nu_1^{(0)} + \cdots + \nu_n^{(0)}) = 0,\\
c_1(\nu_0^{(0)}-1) + c_2(\nu_1^{(0)} + \cdots + \nu_n^{(0)}) = 0.
\end{array}
\right.
\end{eqnarray}
Similarly, if $a_{\nu}^{(i)} \neq 0$
for $\nu=(\nu_0^{(i)}, \nu')=(\nu_0^{(i)}, \nu_1^{(i)}, \ldots, \nu_n^{(i)})$,
we have
\begin{eqnarray}\label{aci}
\hspace{-33mm}
\left\{\begin{array}{l}
a_1\nu_0^{(i)} + a_2(\nu_1^{(i)} + \cdots + \nu_n^{(i)}-1) = 0,\\
c_1\nu_0^{(i)} + c_2(\nu_1^{(i)} + \cdots + \nu_n^{(i)}-1) = 0,
\end{array}
\right.
\end{eqnarray}
for $1 \leq i \leq n$.

Suppose $a_{\nu}^{(0)}\neq0$ for some 
$\nu=(\nu_0^{(0)}, \nu_1^{(0)}, \ldots, \nu_n^{(0)}) \neq (1,0,\ldots,0)$.
Then by (\ref{ac0}) and the assumption $c_1c_2 \neq 0$
it follows that $\nu_0^{(0)}-1 \neq 0$ and $\nu_1^{(0)} + \cdots + \nu_n^{(0)} \neq 0$.
Hence $c_2/c_1 \in \mathbb{Q}$ and $(a_1,a_2) \neq (\pm1,0), (0,\pm1)$ by (\ref{ac0}).
On the other hand,
if $a_{\nu}^{(i)}\neq0$ for some 
$1 \leq i \leq n$ and $\nu=(\nu_0^{(i)}, \nu_1^{(i)}, \ldots, \nu_n^{(i)}) \neq (0,1,0 \ldots,0), 
\ldots, (0,0 \ldots,0,1)$,
then 
$\nu_0^{(i)} \neq 0$ and $\nu_1^{(i)} + \cdots + \nu_n^{(i)} -1 \neq 0$
by (\ref{aci}) and the assumption $c_1c_2 \neq 0$.
In this case, we also obtain $c_2/c_1 \in \mathbb{Q}$ and 
$(a_1,a_2) \neq (\pm1,0), (0,\pm1)$ by (\ref{aci}).
Consequently, we have
\begin{eqnarray*}
\lambda:= a_2/a_1 = c_2/c_1 \in \mathbb{Q}
\end{eqnarray*}
by (\ref{ac0}) or (\ref{aci}).

\bigskip

We now prove that $\lambda$ is a integer.
For the purpose, we assume $\lambda \notin \mathbb{Z}$, that is, $a_1 \neq \pm1$.
First we consider the case $\lambda < 0$.
Since 
$\nu_1^{(i)} + \cdots + \nu_n^{(i)} \geq 0$ for $0 \leq i \leq n$, 
we have $\nu_0^{(0)} \geq 1$ and $\nu_0^{(i)} \geq 0$ by (\ref{ac0}) and (\ref{aci}) .
Furthermore, the Laurent expansions of the components of $f \in \mathrm{Aut}(\Omega)$ are
\begin{eqnarray}\label{form0}
f_0(z_0, \ldots, z_n) 
= \sum_{k=0}^{\infty} \, {\sum_{|\nu'|=k|a_1|}}^{\hspace{-8pt}\prime}\,\,\,
a^{(0)}_{\nu'}z_0^{1+k|a_2|}(z')^{\nu'}
\end{eqnarray}
and
\begin{eqnarray}\label{formi}
f_i(z_0, \ldots, z_n) 
= \sum_{k=0}^{\infty} \, {\sum_{|\nu'|=1+k|a_1|}}^{{\hspace{-13pt}\prime}} \,\,\,
a^{(i)}_{\nu'}z_0^{k|a_2|}(z')^{\nu'}
\end{eqnarray}
for $1 \leq i \leq n$.
Here we have written $a^{(0)}_{\nu'}=a^{(0)}_{(1+k|a_2|,\nu')}$
and $a^{(i)}_{\nu'}=a^{(i)}_{(k|a_2|,\nu')}$,
and so as from now on.

We focus on the first degree terms of the Laurent expansions.
We put
\begin{eqnarray}\label{Pf}
Pf (z) := (a^{(0)}_{(1,0,\ldots,0)}z_0, {\sum_{|\nu'|=1}}' a^{(1)}_{\nu'}(z')^{\nu'}, \ldots,
{\sum_{|\nu'|=1}}' a^{(n)}_{\nu'}(z')^{\nu'}).
\end{eqnarray}
Then as a matrix we can write
\begin{eqnarray*}
Pf = 
\begin{pmatrix}
a^{(0)}_{(1,0,\ldots,0)} & 0 & \cdots & 0 &\\
0 & a^{(1)}_{(0,1,0,\ldots,0)} & \cdots & a^{(1)}_{(0,\ldots,0,1)}\\
 \vdots  & \vdots &  \ddots & \vdots\\
0 & a^{(n)}_{(0,1,0,\ldots,0)} & \cdots           & a^{(n)}_{(0,\ldots,0,1)}
\end{pmatrix}.
\end{eqnarray*}
Then it follows from (\ref{form0}) and (\ref{formi}) that
\begin{eqnarray*}\label{}
P(f \circ h) = Pf \circ Ph, \,\,\,{\rm and}\,\,\, P{\rm id} = {\rm id},
\end{eqnarray*}
where $h \in \mathrm{Aut}(\Omega)$,
and therefore 
\begin{eqnarray*}\label{}
Pf  \in GL(n+1, \mathbb{C})
\end{eqnarray*}
since $f$ is an automorphism.
Hence we have a representation of $GU(n,1)$ given by
\begin{eqnarray*}
GU(n,1) \ni g \longmapsto Pf \in GL(n+1,\mathbb{C}),
\end{eqnarray*}
where $f = \rho(g)$.
The restriction of this representation to the simple Lie group $SU(n,1)$
is nontrivial since $\rho(U(1) \times U(n)) = U(1) \times U(n)$.
However this contradicts Lemma \ref{notsimple}.
Thus it does not occur that $\lambda$ is a negative non-integer.

\bigskip

Next we consider the case $\lambda >0$ and $\lambda \not \in \mathbb{Z}$.
Then $\nu_0^{(0)} \leq 1$ and $\nu_0^{(i)} \leq 0$ by (\ref{ac0}) and (\ref{aci}) 
since $\nu_1^{(i)} + \cdots + \nu_n^{(i)} \geq 0$ for $0 \leq i \leq n$.
Furthermore, the Laurent expansions of $f$ are
\begin{eqnarray*}
f_0(z_0, \ldots, z_n) 
&=& \sum_{k=0}^{\infty} \, {\sum_{|\nu'|=k|a_1|}}^{\hspace{-8pt}\prime}\,\,\,
a^{(0)}_{\nu'}z_0^{1-k|a_2|}(z')^{\nu'}\\
&=& a^{(0)}_{(1,0,\ldots,0)}z_0 
+ {\sum_{|\nu'|=|a_1|}}^{\hspace{-5pt}\prime} \,\,a^{(0)}_{(1-|a_2|,\nu')}z_0^{1-|a_2|}(z')^{\nu'}\\
&&\,\,\,\,\,\,\,\,\,\,\,\,\,\,\,\,\,\,\,\,\,\,\,\,\,\,\,\,\,\,\,\,\,\,\,\,\,\,\,\,{\hspace{13pt}}
+ {\sum_{|\nu'|=2|a_1|}}^{\hspace{-8pt}\prime} \,\,a^{(0)}_{(1-2|a_2|,\nu')}z_0^{1-2|a_2|}(z')^{\nu'} 
+ \cdots,
\end{eqnarray*}
and
\begin{eqnarray*}
f_i(z_0, \ldots, z_n) 
&=& \sum_{k=0}^{\infty} \, {\sum_{|\nu'|=1+k|a_1|}}^{\hspace{-13pt}\prime}\,\,\, 
a^{(i)}_{\nu'}z_0^{-k|a_2|}(z')^{\nu'}\\
&=& {\sum_{|\nu'|=1}}^{\prime} \,\,a^{(i)}_{(0,\nu')}(z')^{\nu'} 
+ {\sum_{|\nu'|=1+|a_1|}}^{\hspace{-10pt}\prime} \,\,a^{(i)}_{(-|a_2|,\nu')}z_0^{-|a_2|}(z')^{\nu'}\\
&&\,\,\,\,\,\,\,\,\,\,\,\,\,\,\,\,\,\,\,\,\,\,\,\,\,\,\,\,\,\,\,\,\,\,\,\,\,\,\,\,{\hspace{13pt}}
+ {\sum_{|\nu'|=1+2|a_1|}}^{\hspace{-13pt}\prime} \,\,a^{(0)}_{(-2|a_2|,\nu')}z_0^{-2|a_2|}(z')^{\nu'} 
+ \cdots
\end{eqnarray*}
for $1\leq i \leq n$.
We claim that $a^{(0)}_{(1,0,\ldots,0)} \neq 0$.
Indeed, if $a^{(0)}_{(1,0,\ldots,0)} = 0$,
then $f(z_0, 0, \ldots, 0) = (0, \ldots, 0) \in \mathbb{C}^{n+1}$.
This contradicts that $f$ is an automorphism.
Take another $h \in \mathrm{Aut}(\Omega) \setminus G$
and put its Laurent expansions
\begin{eqnarray*}
h_0(z_0, \ldots, z_n) 
&=& \sum_{k=0}^{\infty} \, {\sum_{|\nu'|=k|a_1|}}^{\hspace{-8pt}\prime}\,\,\,
b^{(0)}_{\nu'}z_0^{1-k|a_2|}(z')^{\nu'}\\
h_i(z_0, \ldots, z_n) 
&=& \sum_{k=0}^{\infty} \, {\sum_{|\nu'|=1+k|a_1|}}^{\hspace{-13pt}\prime}\,\,\, 
b^{(i)}_{\nu'}z_0^{-k|a_2|}(z')^{\nu'}
\end{eqnarray*}
for $1\leq i \leq n$. We have $b^{(0)}_{(1,0,\ldots,0)} \neq 0$ as above.
We mention the first degree terms of $f \circ h$.
For the first component
\begin{eqnarray*}
f_0(h_0, \ldots, h_n) 
&=& a^{(0)}_{(1,0,\ldots,0)}h_0 + 
\sum_{k=1}^{\infty} \, {\sum_{|\nu'|=k|a_1|}}^{\hspace{-7pt}\prime}\,\,\,\,\,\,
a^{(0)}_{\nu'}h_0^{1-k|a_2|}(h')^{\nu'}.
\end{eqnarray*}
Then, for $k>0$,
\begin{eqnarray*}
h_0(z)^{1-k|a_2|} &=& \bigl(\sum_{l=0}^{\infty} \, {\sum_{|\nu'|=l|a_1|}}^{\hspace{-8pt}\prime}
b^{(0)}_{\nu'}z_0^{1-l|a_2|}(z')^{\nu'}\bigr)^{1-k|a_2|}
=
z_0^{1-k|a_2|} \bigl(\sum_{l=0}^{\infty} \, {\sum_{|\nu'|=l|a_1|}}^{\hspace{-8pt}\prime} 
b^{(0)}_{\nu'}z_0^{-l|a_2|}(z')^{\nu'}\bigr)^{1-k|a_2|}\\
&=&
(b^{(0)}_{0_n}z_0)^{1-k|a_2|} \left(1 - 
\frac{1-k|a_2|}{b^{(0)}_{0_n}} z_0^{-|a_2|} {\sum_{|\nu'|=|a_1|}}^{\hspace{-8pt}\prime}b^{(0)}_{\nu'}(z')^{\nu'}
+ \cdots \right)
\end{eqnarray*}
Thus
$h_0(z)^{1-k|a_2|}$ has the maximum degree of $z_0$ at most $1 - k|a_2| < 1$
and has the minimum degree of $z'$ at least $|a_1| > 1$
in its Laurent expansion.
For $|\nu'|=k|a_1|$ and $k>0$,
$(h')^{\nu'}$ has the maximum degree of $z_0$ at most $-|a_2| < 0$
and
the first degree terms of $z'$ are with coefficients of a negative degree $z_0$ term
in its Laurent expansion.
Hence the first degree term of Laurent expansion of $f_0(h_0, \ldots, h_n)$ 
is $a^{(0)}_{(1,0,\ldots,0)}b^{(0)}_{(1,0,\ldots,0)}z_0$.

Similarly, consider
\begin{equation*}
f_i(h_0, \ldots, h_n) 
= {\sum_{|\nu'|=1}}^{\prime}\,\, a^{(i)}_{\nu'}(h')^{\nu'} + 
\sum_{k=1}^{\infty} \, 
{\sum_{|\nu'|=1+k|a_1|}}^{\hspace{-13pt}\prime}\,\,a^{(i)}_{\nu'}h_0^{-k|a_2|}(h')^{\nu'}
\end{equation*}
for $1\leq i \leq n$.
Then, for $k>0$,
\begin{eqnarray*}
h_0^{-k|a_2|}
&=& (b^{(0)}_{0_n}z_0)^{-k|a_2|} \left(1 - 
\frac{-k|a_2|}{b^{(0)}_{0_n}} z_0^{-|a_2|} {\sum_{|\nu'|=|a_1|}}^{\hspace{-8pt}\prime}b^{(0)}_{\nu'}(z')^{\nu'}
+ \cdots \right).
\end{eqnarray*}
Thus $h_0^{-k|a_2|}$ has the maximum degree of $z_0$ at most $- k|a_2| < 0$
 and has the minimum degree of $z'$ at least $|a_1| > 1$
 in its Laurent expansion.
For $|\nu'|=1+k|a_1|$ and $k>0$, $(h')^{\nu'}$
has the maximum degree of $z_0$ at most $-|a_2| < 0$
and the first degree terms of $z'$ are with coefficients of negative degree $z_0$ term 
in its Laurent expansion.
Hence the first degree terms of the Laurent expansions of $f_i(h_0, \ldots, h_n)$ 
is 
\begin{eqnarray*}
\sum_{j=1}^n {\sum_{|\nu'|=1}}^{\prime}\, a^{(i)}_{\nu_j}b^{(j)}_{\nu'}(z')^{\nu'},
\end{eqnarray*}
where $\nu_j=(0,\ldots,0,1_j,0,\ldots,0)$,
that is, the $j$-th component is $1$ and the others are $0$.

We put $Pf$ as (\ref{Pf}).
Consequently, 
\begin{eqnarray*}\label{}
P(f \circ h) = Pf \circ Ph, \,\,\,{\rm and}\,\,\, P{\rm id} = {\rm id},
\end{eqnarray*}
and therefore 
\begin{eqnarray*}\label{}
Pf  \in GL(n+1, \mathbb{C})
\end{eqnarray*}
since $f$ is an automorphism.
Then the same argument as that in previous case ($\lambda < 0$) shows that
this is a contradiction.
Thus it does not occur that $\lambda$ is positive non-integer.

Hence
we have $\lambda = c_2/c_1 = a_2/a_1 \in \mathbb{Z}\setminus \{0\}$ and 
$a_1=\pm1$.
We now prove $\lambda = \pm 1$.
By (\ref{ac0}), (\ref{aci}) and Remark \ref{remt}, 
the Laurent expansions of $f \in \mathrm{Aut}(\Omega)$ are
\begin{eqnarray*}
f_0(z_0, \ldots, z_n) 
= \sum_{k=0}^{\infty} \, {\sum_{|\nu'|=k}}' 
a^{(0)}_{\nu'}z_0^{1-k\lambda}(z')^{\nu'},
\end{eqnarray*}
and
\begin{eqnarray*}
f_i(z_0, \ldots, z_n) 
= a^{(i)}_{(\lambda,0,\ldots,0)}z_0^{\lambda} + 
\sum_{k=0}^{\infty} \, {\sum_{|\nu'|=1+k}}^{\hspace{-7pt}\prime} \,\,
a^{(i)}_{\nu'}z_0^{-k\lambda}(z')^{\nu'}
\end{eqnarray*}
for $1\leq i \leq n$.
Consider the actions of $(e^{2\pi i\frac{m}{\lambda}},1, \ldots, 1) \in T^{n+1}$
on $\Omega$, for $1 \leq m \leq |\lambda|$.
Then
\begin{eqnarray*}
f_0(e^{2\pi i\frac{m}{\lambda}}z_0, \ldots, z_n) 
&=& \sum_{k=0}^{\infty} \, {\sum_{|\nu'|=k}}' 
a^{(0)}_{\nu'}(e^{2\pi i\frac{m}{\lambda}}z_0)^{1-k\lambda}(z')^{\nu'}\\
&=& e^{2\pi i\frac{m}{\lambda}}\sum_{k=0}^{\infty} \, {\sum_{|\nu'|=k}}' 
a^{(0)}_{\nu'}z_0^{1-k\lambda}(z')^{\nu'}\\
&=& e^{2\pi i\frac{m}{\lambda}} f_0(z_0, \ldots, z_n),
\end{eqnarray*}
and
\begin{eqnarray*}
f_i(e^{2\pi i\frac{m}{\lambda}}z_0, \ldots, z_n) 
= f_i(z_0, \ldots, z_n) 
\end{eqnarray*}
for $1\leq i \leq n$.
Thus $(e^{2\pi i\frac{m}{\lambda}},1, \ldots, 1) \in T^{n+1}$ for $1 \leq m \leq |\lambda|$
are included in the center $\rho(\mathbb{C}^*)$
of $\rho(GU(n,1))$.
Since $a_2c_2 \neq 0$, we see that the integer $\lambda$ must be $\pm 1$.

\bigskip
{\bf Case (ii)}: $c_1 \neq 0$, $c_2 = 0$.\\
In this case,
$\Omega \subset  \mathbb{C}^{n+1}$ can be written
of the form $(\mathbb{C} \times D) \cup  (\mathbb{C}^* \times D')$,
where $D$ and $D'$ are open sets in $\mathbb{C}^n$.
Indeed, 
$\Omega = (\Omega \cap \{z_0 = 0\}) \cup (\Omega \cap \{z_0 \neq 0\})$.
Then $\{0\} \times D := \Omega \cap \{z_0 = 0\} \subset \Omega$
implies $\mathbb{C} \times D \subset \Omega$ 
by $\rho(\mathbb{C}^*)$- and $T^{n+1}$-actions on $\Omega$.
On the other hand,
$\Omega \cap \{z_0 \neq 0\} = \mathbb{C}^* \times D'$
for some open set $D' \subset \mathbb{C}^n$ 
by $\rho(\mathbb{C}^*)$- and $T^{n+1}$-actions.
Thus 
$\Omega = (\mathbb{C} \times D) \cup  (\mathbb{C}^* \times D')$.
Then, by Lemma \ref{notLie}, $\mathrm{Aut}(\Omega)$ has no Lie group structure,
and this contradicts the assumption $\mathrm{Aut}(\Omega) = GU(n,1)$.

\bigskip
{\bf Case (iii)}: $c_1 = 0$ and $c_2 \neq 0$.\\
As in the previous case, $\Omega \subset  \mathbb{C}^{n+1}$ can be written
of the form $(D'' \times \mathbb{C}^n) \cup  (D''' \times \mathbb{C}^n \setminus \{0\} )$
by $\rho(\mathbb{C}^*)$- and $T^{n+1}$-actions on $\Omega$,
where $D''$ and $D'''$ are open sets in $\mathbb{C}$.
Then, by Lemma \ref{notLie}, 
$\mathrm{Aut}(\Omega)$ has no Lie group structure,
and this contradicts our assumption. 
\end{proof}

\bigskip
Since $G=\rho(G(U(1) \times U(n)))$ acts as linear transformations on 
$\Omega \subset \mathbb{C}^{n+1}$, 
it preserves the boundary $\partial \Omega$ of $\Omega$.
We now study the action of $G$ on $\partial \Omega$.
$G$-orbits of points in $\mathbb{C}^{n+1}$ 
consist of four types as follows:

(i) If 
$p = (p_0, p_1, \ldots, p_n) \in \mathbb{C}^* \times (\mathbb{C}^n \setminus \{0_n\})$,
then
\begin{equation} \label{GU1}
G \cdot p = \{ (z_0, \ldots, z_n) \in \mathbb{C}^{n+1} \setminus \{0\}
 : -a|z_0|^{2\lambda} + |z_1|^2 + \cdots + |z_n|^2 = 0\},
\end{equation}
where
$a:=(|p_1|^2 + \cdots + |p_n|^2)/|p_0|^{2\lambda} > 0$
and $\lambda = \pm 1$ by Claim \ref{cla1}.

\smallskip

(ii) If $p' = (0, p'_1, \ldots, p'_n) \in \mathbb{C}^{n+1} \setminus \{0\}$,
then
\begin{equation} \label{GU2}
G \cdot p' = \{0_1\} \times (\mathbb{C}^n \setminus \{0_n\}).
\end{equation}

\smallskip

(iii) If $p'' = (p''_0, 0, \ldots, 0) \in \mathbb{C}^{n+1} \setminus \{0\}$,
then
\begin{equation} \label{GU3}
G \cdot p'' = \mathbb{C}^* \times \{0_n\}.
\end{equation}

\smallskip

(iv) If $p''' = (0, \ldots, 0) \in \mathbb{C}^{n+1}$,
then
\begin{equation} \label{GU4}
G \cdot p''' = \{0\} \subset \mathbb{C}^{n+1}.
\end{equation}

\smallskip

\begin{claim}\label{5.1}
$\Omega \cap (\mathbb{C}^* \times (\mathbb{C}^n \setminus \{0_n\}))$ is a proper subset of
$\mathbb{C}^* \times (\mathbb{C}^n \setminus \{0_n\})$.
\end{claim}

\begin{proof}
If $\Omega \cap (\mathbb{C}^* \times (\mathbb{C}^n \setminus \{0_n\}))
= \mathbb{C}^* \times (\mathbb{C}^n \setminus \{0_n\})$,
then 
$\Omega$ equals one of the following domains 
by $G$-actions of type (\ref{GU2}) and (\ref{GU3}) above:
\begin{equation*} 
\mathbb{C}^{n+1},
\mathbb{C}^{n+1}  \setminus \{0\},
\mathbb{C}^* \times (\mathbb{C}^n \setminus \{0_n\}),
\mathbb{C} \times (\mathbb{C}^n \setminus \{0_n\})
\,\,\mathrm{or} \,\,\mathbb{C}^* \times \mathbb{C}^n.
\end{equation*}
However these can not occur 
since all automorphism groups of these domains 
are not Lie groups, by Lemma \ref{notLie}.
This contradicts that
$\mathrm{Aut}(\Omega) = GU(n,1)$.
\end{proof}

\bigskip
By Claim \ref{5.1}, 
$\partial \Omega \cap (\mathbb{C}^* \times (\mathbb{C}^n \setminus \{0_n\}))
\neq \emptyset$.
Thus we can take a point 
\begin{eqnarray*} 
p=(p_0, \ldots, p_n) \in \partial \Omega \cap 
(\mathbb{C}^* \times (\mathbb{C}^n \setminus \{0_n\})).
\end{eqnarray*} 
Let 
\begin{eqnarray*} 
&&a=(|p_1|^2 + \cdots + |p_n|^2)/|p_0|^{2\lambda} > 0,\\
&&A_{a,\lambda} = \{ (z_0, \ldots, z_n) \in \mathbb{C}^{n+1} : 
-a|z_0|^{2\lambda} + |z_1|^2 + \cdots + |z_n|^2 = 0\}.
\end{eqnarray*}
Note that 
\begin{equation*} 
\partial \Omega  \supset A_{a,\lambda}.
\end{equation*}
If $\lambda = 1$,
then $\Omega$ is included in
\begin{equation*} 
D_{a,1} = \{ |z_1|^2 + \cdots + |z_n|^2 > a|z_0|^{2}\}
\end{equation*}
or 
\begin{equation*} 
C_{a,1} = \{ |z_1|^2 + \cdots + |z_n|^2 < a|z_0|^{2}\}.
\end{equation*}
If $\lambda = -1$,
then $\Omega$ is included in
\begin{equation*} 
D_{a,-1} = \{ (|z_1|^2 + \cdots + |z_n|^2)|z_0|^{2} > a\}
\end{equation*}
or 
\begin{equation*} 
C_{a,-1} = \{ (|z_1|^2 + \cdots + |z_n|^2)|z_0|^{2} < a\}.
\end{equation*}

\begin{claim}\label{eqthm}
If $\Omega=D_{a,1}$,  
then $\Omega$ is biholomorphic to $D^{n,1}$.
\end{claim}

\begin{proof}
Indeed there exists a biholomorphic map
\begin{eqnarray*}
\Phi : D_{a,1} \ni (z_0, z_1, \ldots, z_n) \longmapsto (a^{-1/2}z_0, z_1, \ldots, z_n) \in D^{n,1}.
\end{eqnarray*}
\end{proof}

We will show that Claim \ref{eqthm} is the only case that a domain has the automorphism 
group isomorphic to $GU(n,1)$.

Let us first consider the case $\partial \Omega = A_{a,\lambda}$,
that is, $\Omega = C_{a,1}, D_{a,-1}$ or $C_{a,-1}$,
and we derive contradictions.

\begin{claim}
$\mathrm{Aut}(C_{a,1})$ and $\mathrm{Aut}(D_{a,-1})$
are not  Lie groups,  
so $\Omega \neq C_{a,1}, D_{a,-1}$.
\end{claim}

\begin{proof}
Indeed, $C_{a,1}$ is biholomorphic to
$\mathbb{C}^* \times \mathbb{B}^n$,
and 
$D_{a,-1}$ is biholomorphic to
$\mathbb{C}^* \times (\mathbb{C}^n \setminus \mathbb{B}^n)$.
The automorphism groups of these domains are not Lie groups,
by Lemma \ref{notLie}.
\end{proof}

\begin{claim} \label{cla4}
$\Omega \neq C_{a,-1}$.
\end{claim}

\begin{proof}
Suppose $\Omega = C_{a,-1}$.
Then, for $f \in \mathrm{Aut}(\Omega)\setminus G$,
the Laurent expansions are
\begin{eqnarray*}
f_0(z_0, \ldots, z_n) 
=  \sum_{k=0}^{\infty} {\sum_{|\nu'|=k}}' 
a^{(0)}_{\nu'}z_0^{1+k}(z')^{\nu'},
\end{eqnarray*}
and
\begin{eqnarray*}
f_i(z_0, \ldots, z_n) = 
a^{(i)}_{(\lambda,0,\ldots,0)}z_0^{-1} + 
 \sum_{k=0}^{\infty} {\sum_{|\nu'|=1+k}}^{\hspace{-7pt}\prime}\,\, 
a^{(i)}_{\nu'}z_0^{k}(z')^{\nu'}
\end{eqnarray*}
for $1\leq i \leq n$.
Since $C_{a,-1} \cap \{z_0=0\} \neq \emptyset$,
negative degree of $z_0$ does not arise in the Laurent expansions.
Therefore
\begin{eqnarray*}
f_0(z_0, \ldots, z_n) 
&=&  \sum_{k=0}^{\infty}  {\sum_{|\nu'|=k}}' 
a^{(0)}_{\nu'}z_0^{1+k}(z')^{\nu'},
\\
f_i(z_0, \ldots, z_n) 
&=&  \sum_{k=0}^{\infty} {\sum_{|\nu'|=1+k}}^{\hspace{-7pt}\prime}\,\,
a^{(i)}_{\nu'}z_0^{k}(z')^{\nu'}
\end{eqnarray*}
for $1\leq i \leq n$.
Consider 
\begin{eqnarray*}
Pf (z) = (a^{(0)}_{(1,0,\ldots,0)}z_0, {\sum_{|\nu'|=1}}' a^{(1)}_{(0,\nu')}(z')^{\nu'}, \ldots,
{\sum_{|\nu'|=1}}' a^{(n)}_{(0,\nu')}(z')^{\nu'}),
\end{eqnarray*}
as in the proof of Claim \ref{cla1}.
Then as a matrix we can write
\begin{eqnarray*}
Pf = 
\begin{pmatrix}
a^{(0)}_{(1,0,\ldots,0)} & 0 & \cdots & 0 &\\
0 & a^{(1)}_{(0,1,0,\ldots,0)} & \cdots & a^{(1)}_{(0,\ldots,0,1)}\\
 \vdots  & \vdots &  \ddots & \vdots\\
0 & a^{(n)}_{(0,1,0,\ldots,0)} & \cdots           & a^{(n)}_{(0,\ldots,0,1)}
\end{pmatrix}.
\end{eqnarray*}
Then it follows that
\begin{eqnarray*}\label{}
P(f \circ h) = Pf \circ Ph, \,\,\,{\rm and}\,\,\, P{\rm id} = {\rm id},
\end{eqnarray*}
where $h \in \mathrm{Aut}(\Omega)$,
and therefore 
\begin{eqnarray*}\label{}
Pf  \in GL(n+1, \mathbb{C})
\end{eqnarray*}
since $f$ is an automorphism.
Hence we have a representation of $GU(n,1)$ given by
\begin{eqnarray*}
GU(n,1) \ni g \longmapsto Pf \in GL(n+1,\mathbb{C}),
\end{eqnarray*}
where $f = \rho(g)$.
The restriction of this representation to the simple Lie group $SU(n,1)$
is nontrivial since $\rho(U(1) \times U(n)) = U(1) \times U(n)$.
However this contradicts Lemma \ref{notsimple}.
Thus $\Omega \neq C_{a,-1}$.
\end{proof}

\bigskip
Let us consider the case $\partial \Omega \neq A_{a,\lambda}$.

\smallskip

{\bf Case (I)} : $(\partial \Omega \setminus A_{a,\lambda}) \cap (\mathbb{C}^* \times (\mathbb{C}^n \setminus \{0_n\}))
 = \emptyset$.
\\
In this case,
$\partial \Omega$ is the union of $A_{a,\lambda}$ and some of the following sets
\begin{equation}\label{set}
\{0_1\} \times (\mathbb{C}^n \setminus \{0_n\}),
\mathbb{C}^* \times \{0_n\}
\,\,\mathrm{or} \,\,
\{0\} \subset \mathbb{C}^{n+1},
\end{equation}
by the $G$-actions on the boundary of type 
(\ref{GU2}), (\ref{GU3}) and (\ref{GU4}) above.
If $\Omega \subset D_{a,-1}$,
then sets in (\ref{set})
can not be included in the boundary of $\Omega$.
Thus we must consider only the case $\Omega \subsetneq D_{a,1}$,
$C_{a,1}$ or $C_{a,-1}$.

\smallskip

{\bf Case (I-i)} : $\Omega \subsetneq D_{a,1}$.\\
In this case, $\mathbb{C}^* \times \{0_n\}$ can not be a subset of the boundary of $\Omega$,
and $\{0\} \in A_{a,1}$.
Thus 
\begin{eqnarray*}
&&\partial \Omega = A_{a,1} \cup (\{0_1\} \times \mathbb{C}^n),\\
&&\Omega = D_{a,1} \setminus (\{0_1\} \times \mathbb{C}^n).
\end{eqnarray*}
Then, $\Omega$ is biholomorphic to $\mathbb{C}^* \times (\mathbb{C}^n \setminus
\mathbb{B}^{n})$ and 
$\mathrm{Aut}(\mathbb{C}^* \times (\mathbb{C}^n \setminus \mathbb{B}^{n}))$
does not have a Lie group structure.
This contradicts the assumption that $\mathrm{Aut}(\Omega) = GU(n,1)$.
Thus this case does not occur.

\smallskip

{\bf Case (I-ii)} : $\Omega \subsetneq C_{a,1}$.\\
In this case, $\{0\} \times (\mathbb{C}^n \setminus \{0_n\})$ 
can not be a subset of the boundary of $\Omega$,
and $\{0\} \in A_{a,1}$.
Thus 
\begin{eqnarray*}
&&\partial \Omega = A_{a,1} \cup (\mathbb{C} \times \{0_n\}),\\
&&\Omega = C_{a,1} \setminus (\mathbb{C} \times \{0_n\}).
\end{eqnarray*}
Then, $\Omega$ is biholomorphic to 
$\mathbb{C}^* \times (\mathbb{B}^{n} \setminus \{0_n\})$
and 
$\mathrm{Aut}(\mathbb{C}^* \times (\mathbb{B}^{n} \setminus \{0_n\}))$
does not have a 
Lie group structure.
This contradicts the assumption that $\mathrm{Aut}(\Omega) = GU(n,1)$,
and this case does not occur.

\smallskip

{\bf Case (I-iii)} : $\Omega \subsetneq C_{a,-1}$.\\
In this case, $\Omega$ coincides with one of the followings:
\begin{eqnarray*}
&&C_1=C_{a,-1} \setminus (\{0_1\} \times \mathbb{C}^n) \cup(\mathbb{C} \times \{0_n\}),
\\
&&C_2=C_{a,-1} \setminus (\{0_1\} \times \mathbb{C}^n),\\
&&C_3=C_{a,-1} \setminus (\mathbb{C} \times \{0_n\}),\\
&&C_4=C_{a,-1} \setminus \{0_{n+1}\}.
\end{eqnarray*}
Then
$C_1$ is biholomorphic to $\mathbb{C}^* \times (\mathbb{B}^n \setminus \{0_n\})$,
and $C_2$ is biholomorphic to $\mathbb{C}^* \times \mathbb{B}^n$.
The automorphism groups of these domains are not Lie groups.
This contradicts the assumption.
The proof of Claim \ref{cla4} also leads that $\Omega \neq C_3, C_4$
since $C_3 \cap \{z_0=0\} \neq \emptyset$ and $C_4 \cap \{z_0=0\} \neq \emptyset$.
Thus this case does not occur.

\bigskip

{\bf Case (I\hspace{-.1em}I)} : 
$(\partial \Omega \setminus A_{a,\lambda}) \cap (\mathbb{C}^* \times (\mathbb{C}^n \setminus \{0_n\})) \neq \emptyset$.
\\
In this case, we can take a point 
$p'=(p'_0, \ldots, p'_n) \in (\partial \Omega \setminus A_{a,\lambda}) \cap (\mathbb{C}^* \times (\mathbb{C}^n \setminus \{0_n\}))$.
Put 
\begin{eqnarray*}
&&b=(|p'_1|^2 + \cdots + |p'_n|^2)/|p'_0|^{2\lambda} > 0,\\
&&B_{b,\lambda}= \{(z_0, \ldots, z_n) \in \mathbb{C}^{n+1} : -b|z_0|^{2\lambda} + |z_1|^2 + \cdots + |z_n|^2 = 0\}.
\end{eqnarray*}
We may assume $a>b$ without loss of generality.

\smallskip

{\bf Case (I\hspace{-.1em}I-i)} : $\partial \Omega = A_{a,\lambda} \cup B_{b,\lambda}$. \\
Since $\Omega$ is connected, it coincides with
\begin{eqnarray*} 
C_{a,1} \cap D_{b,1} 
= \{ b|z_0|^{2} < |z_1|^2 + \cdots + |z_n|^2 < a|z_0|^{2} \},
\end{eqnarray*}
or
\begin{eqnarray*} 
C_{a,-1} \cap D^{-}_{b,-1} 
= \{ b < (|z_1|^2 + \cdots + |z_n|^2)|z_0|^{2} < a \}.
\end{eqnarray*}
These domains are biholomorphic to $\mathbb{C}^* \times \mathbb{B}^n(a,b)$, 
where 
\begin{equation*} 
\mathbb{B}^n(a,b) = \{(z_1, \ldots, z_n) \in \mathbb{C}^n : b<|z_{1}|^{2} + \cdots + |z_{n}|^{2} < a\}.
\end{equation*}
Then $\mathrm{Aut}(\mathbb{C}^* \times \mathbb{B}^n(a,b))$
does not have a Lie group structure by Lemma \ref{notLie}, 
and this contradicts our assumption.
Thus this case does not occur.

\smallskip

{\bf Case (I\hspace{-.1em}I-ii)} : 
$\partial \Omega \neq A_{a,\lambda} \cup B_{b,\lambda}$.
\\
Suppose $\partial \Omega \cap (\mathbb{C}^* \times \mathbb{C}^n \setminus \{0_n\})
\setminus (A_{a,\lambda} \cup B_{b,\lambda}) \neq \emptyset$,
then we can take 
\begin{equation*} 
p''=(p''_0, \ldots, p''_n) \in \partial \Omega \cap (\mathbb{C}^* \times \mathbb{C}^n 
\setminus \{0_n\}) \setminus (A_{a,\lambda} \cup B_{b,\lambda}).
\end{equation*}
Then put
\begin{eqnarray*} 
&&c=(|p''_1|^2 + \cdots + |p''_n|^2)/|p''_0|^{2\lambda},\\
&&C_{c,\lambda} = \{(z_0, \ldots, z_n) \in \mathbb{C}^{n+1} : 
-c|z_0|^{2\lambda} + |z_1|^2 + \cdots + |z_n|^2 = 0\}.
\end{eqnarray*}
We have $A_{a,\lambda} \cup B_{b,\lambda} \cup C_{c,\lambda} \subset \partial \Omega$.
However $\Omega$ is connected, this is impossible. 
Therefore this case does not occur.
Let us consider the remaining case:
\begin{eqnarray*} 
\partial \Omega \cap (\mathbb{C}^* \times \mathbb{C}^n \setminus \{0_n\})
\setminus (A_{a,\lambda} \cup B_{b,\lambda}) = \emptyset.
\end{eqnarray*}
However,
$\mathbb{C}^* \times \{0_n\}$, $\{0\} \times (\mathbb{C}^n \setminus \{0_n\})$
and $\{0\} \in \mathbb{C}^{n+1}$
can not be subsets of the boundary of $\Omega$
since $\Omega \subset C_{a,1} \cap D_{b,1}$
or
$\Omega \subset C_{a,-1} \cap D_{b,-1}$.
Thus this case does not occur either.

\smallskip
We have shown that $\partial \Omega = A_{a,1}$ and $\Omega = D_{a,1}$ 
which is biholomorphic to $D^{n,1}$.
\end{proof}

\bigskip

\section{A counterexample of the group-theoretic characterization}

\begin{theorem}\label{fth}
There exist unbounded homogeneous domains in $\mathbb{C}^n, n \geq 5$ which are not biholomorphically equivalent,
while their automorphism groups are isomorphic.
\end{theorem}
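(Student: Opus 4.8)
The plan is to exhibit, concretely, two unbounded homogeneous domains that share an isomorphic automorphism group but are biholomorphically distinct. A natural strategy, given the machinery developed in the paper, is to build these examples as products. If $D_1$ and $D_2$ are homogeneous domains whose automorphism groups happen to be abstractly isomorphic as topological groups—say because both arise from the same symmetric space datum or the same reductive Lie group acting in superficially different complex-analytic ways—then one can hope that $D_1$ and $D_2$ are nonetheless not biholomorphic. The dimension restriction $n \geq 5$ strongly suggests that the construction needs enough room to realize a coincidence of automorphism groups that cannot happen in low dimension; I would look for the smallest pair of symmetric tube-type (or Siegel-type) domains, or products such as $\mathbb{C}^* \times B$ versus some other bundle, whose isometry/automorphism groups coincide as Lie groups by a low-rank exceptional isomorphism.

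\smallskip

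Concretely, I would first identify a candidate pair. A promising source is the family of matrix domains: the automorphism group of a classical bounded symmetric domain is a real simple Lie group, and there are well-known accidental isomorphisms among low-dimensional simple Lie groups (for instance $\mathfrak{so}(2,4) \cong \mathfrak{su}(2,2)$, or isomorphisms involving $\mathfrak{sp}$, $\mathfrak{so}$, and $\mathfrak{su}$ in small rank). Two inequivalent symmetric domains whose automorphism groups are locally isomorphic, combined with a common unbounded ``Cayley-type'' realization and perhaps a product with $\mathbb{C}^*$ or $\mathbb{C}$, should give genuinely non-biholomorphic unbounded homogeneous domains with isomorphic full automorphism groups. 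The first step is therefore to write down $D_1, D_2 \subset \mathbb{C}^n$ explicitly and verify each is unbounded and homogeneous (the latter by exhibiting a transitive subgroup of linear or fractional-linear maps, exactly as $GU(n,1)$ acts transitively on $D^{n,1}$ earlier in the paper).

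\smallskip

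The second step is to compute $\mathrm{Aut}(D_1)$ and $\mathrm{Aut}(D_2)$ and show they are isomorphic as topological groups. Here I would either invoke a known determination of the automorphism group of each domain, or replicate the technique of Theorems \ref{1} and \ref{2}: extend automorphisms across analytic subvarieties via Hartogs/Riemann-type theorems, reduce to a product structure, and read off the group. The isomorphism of the two groups should come down to the accidental Lie-algebra isomorphism together with matching of the relevant center (the $\mathbb{C}^*$ or $\mathbb{R}_{>0}$ factors). The third step is to prove $D_1 \not\cong D_2$ biholomorphically, which is where the example has teeth: I would distinguish them by a biholomorphic invariant that the group isomorphism does not see—for instance the dimension or structure of a canonical boundary stratum, the signature or rank data recoverable from the complex geometry, or the fact that one domain is of tube type while the other is not.

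\smallskip

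The main obstacle I anticipate is the third step, establishing non-biholomorphy. An isomorphism of automorphism groups is exactly the hypothesis we are trying to show is \emph{insufficient}, so no group-theoretic invariant will separate $D_1$ from $D_2$; I must find a genuinely holomorphic invariant. The delicate point is choosing the pair so that the group isomorphism is honest (a topological-group isomorphism, not merely a local Lie-algebra coincidence—so the component groups, centers, and discrete data must match), while simultaneously the complex structures differ in a way detectable by, say, the Bergman kernel, the existence of nonconstant bounded holomorphic functions, or an explicit obstruction to a biholomorphism respecting the respective homogeneous fibrations. Getting both conditions to hold at once is the crux, and this is presumably why the construction only succeeds for $n \geq 5$, where the first accidental isomorphism of the relevant simple groups becomes available.
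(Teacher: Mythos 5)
Your proposal contains a genuine gap: it is a search strategy, not a proof. No pair of domains is ever written down, and the two decisive steps --- exhibiting an isomorphism $\mathrm{Aut}(D_1)\cong\mathrm{Aut}(D_2)$ of topological groups, and proving $D_1\not\cong D_2$ --- are both deferred to invariants you hope exist. Worse, the source you single out as ``promising'' (accidental isomorphisms of low-rank simple Lie groups underlying symmetric domains) cannot produce the required example. For a bounded symmetric domain $D$, the group $\mathrm{Aut}(D)$ is a connected semisimple Lie group, $D\cong \mathrm{Aut}(D)/K$ with $K$ maximal compact; maximal compacts are all conjugate, the invariant complex structure on the quotient is unique up to sign, and the opposite sign is realized by coordinatewise conjugation in the circled (Harish--Chandra) realization, which preserves the classical domains. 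Hence abstractly isomorphic automorphism groups of bounded symmetric domains force the domains to be biholomorphic: the exceptional isomorphisms such as $\mathfrak{su}(2,2)\cong\mathfrak{so}(4,2)$ or $\mathfrak{so}^*(8)\cong\mathfrak{so}(6,2)$ correspond exactly to known biholomorphisms of the associated domains (e.g.\ the $2\times 2$ matrix ball and the four-dimensional Lie ball), not to counterexamples. Passing to Cayley-type unbounded realizations changes nothing, since those are biholomorphic to the bounded models; and taking products with $\mathbb{C}$ or $\mathbb{C}^*$ makes things worse, because by Lemma \ref{notLie} such products have automorphism groups that are not Lie groups at all, so your Lie-theoretic matching has nothing to act on. Your explanation of the bound $n\geq 5$ (``first accidental isomorphism'') is likewise off target.

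The paper's mechanism is entirely different and much more elementary: it takes the two open sides of a single real quadric. For $p,q>1$, $p\neq q$, $p+q=n$ (whence $n\geq 5$), set $D^{p,q}=\{|z_1|^2+\cdots+|z_p|^2-|w_1|^2-\cdots-|w_q|^2>0\}$ and $C^{p,q}$ its exterior; both are unbounded and homogeneous under $GU(p,q)$. Because $p>1$, each slice $D^{p,q}\cap\{w=w'\}$ is the complement of a closed ball in $\mathbb{C}^p$, so by Hartogs every $f\in\mathrm{Aut}(D^{p,q})$ extends to a holomorphic map $\tilde f$ of $\mathbb{C}^n$; applying the same to $f^{-1}$ and invoking uniqueness of analytic continuation gives $\tilde f\in\mathrm{Aut}(\mathbb{C}^n)$. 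Since $\tilde f$ is a homeomorphism of $\mathbb{C}^n$ preserving $D^{p,q}$, it preserves the interior of the complement, namely $C^{p,q}$, so restriction defines $\phi:\mathrm{Aut}(D^{p,q})\to\mathrm{Aut}(C^{p,q})$; using $q>1$ one builds $\psi$ in the other direction, and $\phi,\psi$ are mutually inverse continuous homomorphisms. Thus the two automorphism groups are isomorphic \emph{without ever being computed} (indeed the paper states it does not know $\mathrm{Aut}(D^{p,q})$ explicitly, only expecting it to equal $GU(p,q)$), while non-equivalence of the two domains for $p\neq q$ (note $C^{p,q}$ is $D^{q,p}$ after swapping coordinates) is the remaining, asserted, point. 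This complementary-domain/analytic-continuation idea is precisely what your proposal is missing: the group isomorphism comes from extension across a common boundary, not from a coincidence of Lie algebras.
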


\begin{proof}
Suppose $p,q>1$ and $p+q=n$.
Let
\begin{eqnarray*}
&&D^{p,q}=\{ (z_1, \ldots, z_p, w_1, \ldots, w_q) \in \mathbb{C}^{n} : 
 |z_{1}|^{2}+ \cdots + |z_{p}|^{2} -|w_{1}|^{2} -\cdots -|w_q|^2 > 0 \},\\
&&C^{p,q}=\{ (z_1, \ldots, z_p, w_1, \ldots, w_q) \in \mathbb{C}^{n} : 
 |z_{1}|^{2}+ \cdots + |z_{p}|^{2} -|w_{1}|^{2} -\cdots -|w_q|^2 < 0 \}.
\end{eqnarray*}
If $p \neq q$, then $D^{p,q}$ and $C^{p,q}$ are not biholomorphically equivalent, while
$\mathrm{Aut}(D^{p,q}) = \mathrm{Aut}(C^{p,q})$.
Indeed,
as the proof of Theorem \ref{1}, 
we take $f \in \mathrm{Aut}(D^{p,q})$.
If $(w'_1, \ldots, w'_q) \in \mathbb{C}^q$ is fixed, then the holomorphic functions 
$f_i(\cdots, w'_1, \ldots, w'_q)$ for $i=1, \ldots, n$, on 
$D^{p,q} \cap \{ w_1 = w'_1, \ldots, w_q = w'_q \}$ extend holomorphically to 
$\mathbb{C}^{n} \cap \{ w_1 = w'_1, \ldots, w_q = w'_q \}$ by Hartogs theorem and $p>1$.
Hence, when $w_1, \ldots, w_q$ vary, we obtain a extended holomorphic map $\tilde{f} : \mathbb{C}^{n} \longrightarrow \mathbb{C}^{n}$ such that 
$\tilde{f}|_{D^{p,q}} = f \in \mathrm{Aut}(D^{p,q})$.
The same consideration for $f^{-1} \in \mathrm{Aut}(D^{p,q})$ 
shows that there exists a holomorphic map $g : \mathbb{C}^{n+1} \longrightarrow \mathbb{C}^{n+1}$, 
such that
$g|_{D^{p,q}} = f^{-1}$.
Since $g\circ f = \mathrm{id}$ and $f\circ g = \mathrm{id}$ on $D^{p,q}$, 
the uniqueness of analytic continuation shows that
$g\circ \tilde{f} = \mathrm{id}$ and $\tilde{f}\circ g = \mathrm{id}$ on $\mathbb{C}^{n}$.
Hence $\tilde{f} \in \mathrm{Aut(\mathbb{C}^{n})}$.
Now we see that $\tilde{f}|_{C^{p,q}} \in \mathrm{Aut}(C^{p,q})$ and therefore
we have a group homomorphism
\begin{eqnarray*}
\phi : \mathrm{Aut}(D^{p,q}) \longrightarrow \mathrm{Aut}(C^{p,q}),
\,\,\,\,f \longmapsto \tilde{f}|_{C^{p,q}}.
\end{eqnarray*}

In the same manner, we have
\begin{eqnarray*}
\psi : \mathrm{Aut}(C^{p,q}) \longrightarrow \mathrm{Aut}(D^{p,q}),
\,\,\,\,g \longmapsto \tilde{g}|_{C^{p,q}}.
\end{eqnarray*}
by Hartogs theorem and $q>1$.
It is clear that $\phi \circ \psi = \mathrm{id}$ on $\mathrm{Aut}(C^{p,q})$ and 
$\psi \circ \phi = \mathrm{id}$ on $\mathrm{Aut}(D^{p,q})$.
Thus we obtain $\mathrm{Aut}(D^{p,q}) \simeq \mathrm{Aut}(C^{p,q})$.
\end{proof}

\bigskip
We have not yet 
obtained a explicit description of the automorphism groups $\mathrm{Aut}(D^{p,q})$
for $p, q >1$.
We only expect that $\mathrm{Aut}(D^{p,q}) = GU(p,q)$,
where
\begin{equation*}
GU(p,q)=\{M \in GL(n, \mathbb{C}):M^*JM = \nu(M)J, \mathrm{for\,some}\, \nu(M) \in \mathbb{R}_{>0} \},
\end{equation*}
and 
$J=
\begin{pmatrix}
E_p &   0\\
0 &  -E_q
\end{pmatrix}
$.

The difference between $D^{n,1}$ and $D^{p,q}$ for $p,q>1$ is that
the exterior of $D^{n,1}$ is holomorphically convex domain,
but that of $D^{p,q}$ is not.
It is known that
some holomorphically convex homogeneous Reinhardt domains
are characterized by its automorphism groups with some additional conditions
(see \cite{BKS} and \cite{KS1}).
We may proceed with the  group-theoretic characterization problem
for holomorphically convex homogeneous Reinhardt domains,
or for homogeneous Reinhardt domains with a holomorphically convex exterior domain.


\bigskip

\end{document}